
\documentclass[a4paper,11pt]{article}
\usepackage{indentfirst,latexsym,bm,amsmath,graphicx,amsthm,amssymb}
\usepackage{color}
\usepackage{framed}
\usepackage{relsize}

\bibliographystyle{plain}

\headheight=7pt \topmargin=0pt \textheight=630pt \textwidth=432pt
\oddsidemargin=18pt \evensidemargin=18pt

\newtheorem{theorem}{Theorem}
\newtheorem{proposition}{Proposition}

\newtheorem{lemma}{Lemma}

\newcommand*\samethanks[1][\value{footnote}]{\footnotemark[#1]}
\newcommand{\hl}{}%\textcolor{blue}}

\newcommand{\tr}{^{\mathsf {\scriptscriptstyle T}}}
\newcommand{\str}{^{\mathsf {\scriptscriptstyle T}}}

\def\Expect{\mathbb{E}}
\def\Var{\mbox{Var}}
\def\Cov{\mbox{Cov}}
\def\Prob{\mathbb{P}}

\def\ind{\mathlarger{\mathbb{I}}}

%%%%%%%%%%%%%%%%%%%%%%%%%%%%%%%%%%%%%%%%%%%%%%%%%%
%%%%%%%%%%%%%%%%%%%%%%%%%%%%%%%%%%%%%%%%%%%%%%%%%%
%%%%%%%%%%%%%%%%%%%%%%%%%%%%%%%%%%%%%%%%%%%%%%%%%%
%%%%%%%%%%%%%%%%%%%%%%%%%%%%%%%%%%%%%%%%%%%%%%%%%%
%%%%%%%%%%%%%%%%%%%%%%%%%%%%%%%%%%%%%%%%%%%%%%%%%%
%%%%%%%%%%%%%%%%%%%%%%%%%%%%%%%%%%%%%%%%%%%%%%%%%%
%%%%%%%%%%%%%%%%%%%%%%%%%%%%%%%%%%%%%%%%%%%%%%%%%%
%%%%%%%%%%%%%%%%%%%%%%%%%%%%%%%%%%%%%%%%%%%%%%%%%%
%%%%%%%%%%%%%%%%%%%%%%%%%%%%%%%%%%%%%%%%%%%%%%%%%%
%\usepackage[normalem]{ulem}
%\sout{D}

\begin{document}
\title{Diffusion Parameters of Flows\\ in Stable Multi-class Queueing Networks}
\author{Sarat Babu Moka\thanks{School of Mathematics and Physics, The University of Queensland, Brisbane, Australia.}, Yoni Nazarathy\samethanks, 
Werner Scheinhardt\footnote{Department of Applied Mathematics, University of Twente, Enschede, The Netherlands.}
}

\date{\today}

\baselineskip=15pt
\maketitle
\begin{abstract}
We consider open multi-class queueing networks with general arrival processes, general processing time sequences and Bernoulli routing. The network is assumed to be operating under an arbitrary work-conserving scheduling policy that makes the system stable. 
We study the variability of flows within the network. Computable expressions for quantifying flow variability have  previously been discussed in the literature. However, in this paper, we shed more light on such analysis to justify the use of these expressions in the asymptotic analysis of network flows. Towards that end, we find a simple diffusion limit for the inter-class flows and establish the relation to asymptotic (co-)variance rates.
\end{abstract}
{\small Keywords: Queueing Networks, Diffusion Limits, Asymptotic Variance.} \normalsize

\section{Introduction}
\label{sec:intro}

The study of explicit performance measures of stable queueing networks has been at the heart of applied probability and operations research for the past half century. Initial results such as Burke's Theorem \cite{Burke0245}, indicating that the output of a stationary M/M/1 queue is a Poisson process have motivated the study of queueing output processes with the aim of using the output characteristics of one queue as the input of a downstream queue. While landmark results such as the product form solution of Jackson networks (c.f.\ \cite{jackson50jlq} or \cite{bookKelly1979}) have given much hope and practical utility, in the 1960's and 1970's it was well understood that explicit exact queueing network decomposition is in general not attainable. %See for example \cite{Daley0510}, \cite{bookDisneyKiessler1987}, or \cite{DisneyKonig0283} for classic surveys of queueing networks and their traffic processes. 

The lack of explicit solutions in general cases as well as the inability to exactly decouple most networks has motivated the study of queueing output processes as in \cite{bean1998output}, \cite{Daley0510}, \cite{bookDisneyKiessler1987}, \cite{DisneyKonig0283}, \cite{hautphenne2015intercept} and \cite{whitt2018heavy}. That line of work is also coupled with the development and study of heuristic queueing network decomposition schemes such as the Queueing Network Analyzer (QNA) \cite{whitt1983pqn} (see also \cite{kuehn1979approximate}), and many subsequent approximation methods (see for example the heuristics in \cite{kim2011modeling}). The typical approximating assumption made in such schemes is that each queue in isolation is a G/G/1 queue which can be analysed independently of the other queues. The input process is then approximated by taking into consideration both exogenous arrivals and departures from other queues in the network.

Some of the key ingredients needed for a network decomposition (such as QNA) are based on 
\[
\lambda_{\hl{k}} := \lim_{t \to \infty} \frac{\Expect [ \hl{A}_{\hl{k}}(t) ]}{t},
\quad
\mbox{and}
\quad
\sigma^2_{\hl{k}} = \lim_{t \to \infty} \frac{\Var \Big( \hl{A}_{\hl{k}}(t)  \Big)}{t},
\]
where $\hl{A}_{\hl{k}}(t)$ is the arrival counting process into the queue of class $\hl{k}$:
\[
\hl{A}_{\hl{k}}(t) := \hl{E}_{\hl{k}}(t) + \sum_{i} D_{i,{\hl{k}}}(t),
\]
with $\hl{E}_{\hl{k}}(t)$ representing the exogenous arrival counting process to that queue and $D_{i,j}(t)$ the number of items that have departed from queue $i$ and immediately arrived to queue $j$ during the time interval $[0,t]$. We refer to the counting process $D_{i,j}(t)$ as {\em flow} $i \to j$. The summation in $\hl{A}_{\hl{k}}(t)$ is over all flows $i \to {\hl{k}}$.

Finding $\lambda_{{\hl{k}}}$ exactly is typically a trivial matter based on the network routing matrix and exogenous arrival rates. As opposed to that, $\sigma^2_{{\hl{k}}}$ is more complex. In fact, computable expressions for $\sigma^2_{\hl{k}}$ have only been presented as part of the so-called ``innovations method'' in \cite{kim2011modeling}. Here, the author builds on earlier work \cite{KimRaviColm2005} by Kim, Muralidharan and O'Cinneide, and presents an expression that yields $\sigma^2_{\hl{k}}$ among other performance measures (see equation (42) in \cite{kim2011modeling}).  We independently arrived to similar formulas backed by rigorous proofs, for such performance measures. The performance measures that we cover include,
\begin{equation}
\label{eq:Sig1}
{\sigma}_{i,j} := \lim_{t \to \infty} \frac{\Cov \Big(\hl{A}_{i}(t),\hl{A}_{j}(t) \Big)}{t},
\end{equation}
and the asymptotic variability parameters of flows:
\begin{equation}
\label{eq:Sig2}
{\sigma}_{i \to j}^2  :=\lim_{t \to \infty} \frac{\Var \Big({D}_{i,j}(t) \Big)}{t}, \,\, \text{ and}
\quad
{\sigma}_{i_1\to j_1,i_2 \to j_2} := \lim_{t \to \infty} \frac{\Cov \Big({D}_{i_1,j_1}(t),{D}_{i_2,j_2}(t) \Big)}{t}.
\end{equation}

The focus and contribution of \cite{kim2011modeling} and \cite{KimRaviColm2005} is on advancing the state of the art in network decomposition approximations, and not on exact expressions for asymptotic variability nor on rigorous asymptotic analysis. Hence, in using such results 
%equation (42) from \cite{kim2011modeling} for asymptotic variability computations, 
one is left wondering about the meaning and rigour justifying validity of the expressions at hand. Specifically, there remain open questions regarding stability conditions, the usage in diffusion limits and the relationship to asymptotic variance rates. 

Our key contribution in the current paper is answering such questions as well as presenting detailed formulas for $\sigma^2_k$, $\sigma_{i,j}$, ${\sigma}_{i \to j}^2$ and ${\sigma}_{i_1\to j_1,i_2 \to j_2}$.
%(not explicitly stated in \cite{kim2011modeling}). 
Our formulas 
%(agreeing with equation (42) of \cite{kim2011modeling}), 
hold for a wide class of stable networks. However, for concreteness we present our results in the context of stable multi-class queueing networks (of which generalized Jackson networks are a special case). Our main result,  Theorem~\ref{thm:main}, is formulated as a simple functional central limit theorem (FCLT) for the aforementioned processes and also ties the covariance structure of the limiting FCLT processes to system asymptotic variability parameters via uniform integrability. That is, expressions for  \eqref{eq:Sig1} and \eqref{eq:Sig2} and hence for $\sigma^2_{\hl{k}}$ are rigorously justified. 

In dealing with a stable queueing network, this could be viewed as a ``fundamental'' diffusion limit result similar to some of the results summarized in \cite{bookChenYao2001}. To the best of our knowledge within the context of diffusion limits, this fundamental result has been overlooked by previous authors with the exception of \cite{kim2011modeling} and \cite{KimRaviColm2005} that don't focus on rigorous asymptotic results. This is probably due to the fact that much of the exciting research in the field of diffusion approximations of queueing networks in the past three decades, has focused on critically loaded networks (c.f. \cite{BramsonDai0194}, \cite{dai1991steady}, \cite{reiman1984oqn} \cite{Williams0193},  as well as many other key references summarized in \cite{bookChenYao2001}, \cite{Glynn0518} \cite{bookMeyn2008}, and \cite{bookWhitt2001}). The seminal paper \cite{chen1991stochastic}, does consider diffusion approximations for queueing networks in all regimes (under-loaded, balanced and over-loaded), yet the inter-queue flows are not considered in that paper. Also, in Section~4 of the early influential paper \cite{harrison1988brownian}, Harrison considers flows, but the analysis there is only for the uninterrupted (primitive) processes and not for the true flows $\hl{A}(\cdot)$ and $D(\cdot)$ as we have here.

As described in our main diffusion result, the asymptotic variability of flows is driven by two components: (i) The variability of the arrival flows; (ii) The variability resulting from the Bernoulli routing. In stable networks, the variability of queue sizes (related also to service time distributions) does not play a role. Since asymptotic variability of flows only depends on the interplay of the arrival process variability and the Bernoulli routing, we are also motivated to present an alternative way for quantifying the asymptotic variability parameters: {\em networks with zero service times}. In such networks, jobs that arrive to the network traverse instantaneously through the classes/queues until they depart, and hence the total count of jobs passing on flow $i\to j$ is
\[
\sum_{k} \sum_{\ell = 1}^{\hl{E}_k(t)} N_{i,j|k}(\ell),
\]
where the outer sum is over all classes and $N_{i,j|k}(\ell)$ are counts of the number of passes on $i\to j$ for the $\ell$'th job arriving exogenously to $k$. Using elementary calculations, we find the asymptotic variances and covariances of such processes, and prove they are the same as those originating from the diffusion parameters. It is the zero service time view which allows us to establish uniform integrability and to relate the diffusion parameters to asymptotic variability parameters.

The structure of the sequel is as follows. In Section~\ref{sec:model} we summarize our results in a main theorem together with the notation and assumptions of the model. Then the three sections that follow constitute the proof. In Section~\ref{sec:difflimit} we  present the calculation of the diffusion parameters and diffusion limit. In Section~\ref{sec:zerTime} we present the alternative view of the network based on zero service times. In Section~\ref{sec:asympVarAndUI} we relate the diffusion parameters to asymptotic variance and establish the required UI. We then follow with Section~\ref{sec:example} where we present a numerical example, and compare with the innovations method of \cite{kim2011modeling}. For this comparison and for the convenience of readers, we spell out the details of the relevant results from \cite{kim2011modeling}. Readers are encouraged to read Section~\ref{sec:example} in conjunction with Section~\ref{sec:model}. Closing remarks are in Section~\ref{sec:coclusion}.

%%%%%%%%%%%%%%%%%%%%%%%%%%%%%%%%%%%%%%%%%%%%%%%%%%
%%%%%%%%%%%%%%%%%%%%%%%%%%%%%%%%%%%%%%%%%%%%%%%%%%
%%%%%%%%%%%%%%%%%%%%%%%%%%%%%%%%%%%%%%%%%%%%%%%%%%
%%%%%%%%%%%%%%%%%%%%%%%%%%%%%%%%%%%%%%%%%%%%%%%%%%
%%%%%%%%%%%%%%%%%%%%%%%%%%%%%%%%%%%%%%%%%%%%%%%%%%
%%%%%%%%%%%%%%%%%%%%%%%%%%%%%%%%%%%%%%%%%%%%%%%%%%
%%%%%%%%%%%%%%%%%%%%%%%%%%%%%%%%%%%%%%%%%%%%%%%%%%
%%%%%%%%%%%%%%%%%%%%%%%%%%%%%%%%%%%%%%%%%%%%%%%%%%
%%%%%%%%%%%%%%%%%%%%%%%%%%%%%%%%%%%%%%%%%%%%%%%%%%

\section{Model and Main Result}
\label{sec:model}

We consider open multi-class queueing networks (MCQN) operating under an arbitrary resource allocation policies subject to Bernoulli routing. Special cases include generalized Jackson queueing networks as described in \cite{bookChenYao2001} as well as many other examples appearing in \cite{bramsonBook2008}. Consider a queueing network of $\hl{J}$ servers serving $K$ classes of customers denoted by $1,\ldots,K$. For each class there is a unique server $s(k)$ and let $C_j = \{k : s(k) = j\}$ denote the constituency of server $j$, that is, $C_j$ is the set of all the classes served at server $j$. 

The evolution of the network is driven by the following primitive sequences of random variables:  $\{\xi_k({\hl{\ell}})\}_{{\hl{\ell}}=0}^\infty$  is the sequence of exogenous inter-arrival times to class $k$ and $\{\eta_k({\hl{\ell}})\}_{{\hl{\ell}}=0}^\infty$ is the sequence of service times in class $k$. The sequence \hl{$\{\phi_{i,j}({\hl{\ell}})\}_{{\hl{\ell}}=1}^\infty$} of indicator variables determine if the ${\hl{\ell}}$'th job departing from class ${\hl{i}}$ moves to class $\hl{j}$ (this is indicated via \hl{$\phi_{i,j}(\ell) = 1$}) where $\hl{i} \in \{1,\ldots,K\}$ and $\hl{j} \in \{0,\ldots,K\}$ with $\hl{j} = 0$ implying the job departing from the network. \hl{For any $\ell$,}
\begin{equation}
\label{eq:exactly1destination}
\hl{
\sum_{j = 0}^K \phi_{i,j}({\hl{\ell}}) = 1.
}
\end{equation}

These primitives are assumed to exist on a joint probability space and construct the primitive processes $\big(\hl{E}, S, \Phi \big)$, as we describe now.
\[
\hl{E}_k(t) = \max \{ n \ge 0 ~:~ \sum_{{\hl{\ell}}=0}^{n-1}\xi_k({\hl{\ell}}) \le t \}
\qquad \text{and} \qquad
S_k(t) = \max \{ n \ge 0 ~:~ \sum_{{\hl{\ell}}=0}^{n-1}\eta_k({\hl{\ell}}) \le t \},
\]
with the convention that summation from $0$ to $-1$ is $0$. The counting processes, $\hl{E}_k(t)$ and $S_k(t)$ represent the number of exogenous arrivals to class $k$ during $[0,t]$ and the number of jobs served during uninterrupted service in class $k$ during $[0,t]$, respectively. Further for ${\hl{\ell}}=1,2,\ldots$, let,
\[
\hl{
\Phi_{i,j}({\hl{\ell}}) = \sum_{\ell'=1}^{\hl{\ell}} \phi_{i,j}(\ell'),
}
\]
which denotes the number of items routed from class $\hl{i}$ to class $\hl{j}$ out of the first $\hl{\ell}$ items served at~$\hl{i}$. Due to \eqref{eq:exactly1destination}, \hl{for any $\ell$,}
\begin{equation}
\label{eq:phiTotal}
\hl{
\sum_{j=0}^K \Phi_{i,j}(\ell)= \ell,
\quad
i=1,\ldots,K.
}
\end{equation}
%We make the following assumption on the primitives processes $(A, S, \Phi)$.

%%%%%%%%%%%%%%%%%%%%%%%%%%%%%%%%%%%
%%%%%%%%%%%%%%%%%%%%%%%%%%%%%%%%%%%
%\subsection*{Assumptions}
%\label{assumptions}

 The primitive processes $(\hl{E}, S, \Phi)$ interact to yield the network processes $(T,Q,D,\hl{A})$ which we define now. 
Let $T_k(t)$ denote the work (in units of time) allocated towards serving class $k$ during the time interval $[0,t]$. In general, $T$ is policy dependent as it captures how server effort is allocated among classes. We have for all $j \in \{1,\ldots,\hl{J}\}$,
\[
\hl{\sum_{k \in C_j} T_k(t) \le t.}
\]
With $T_k(t)$ at hand, the actual number of class $k$ jobs served during $[0,t]$ is $S_k \big( T_k(t) \big)$. Further, composing with $\Phi$ we define the {\em inter-class flows} via,
\begin{equation}
\label{eq:depComposition}
D_{i,j}(t) = \Phi_{i,j}\Big(S_i \big(T_i(t) \big)\Big),
\quad i=1,\ldots,K,\,\,  j=0,\ldots,K.
\end{equation}
Let $Q_k(t)$ denote the number of items \hl{of class $k$ at time $t$ in the system (queue or in service)}. \hl{We refer to this number as the queue length, and it satisfies} 
\hl{
\[
Q_k(t) = \hl{A}_k(t) - \sum_{j=0}^K D_{k,j}(t) + Q_k(0),
\]
}
where the (total) arrival process to class $k$ is,
\begin{equation}
\label{eq:queueArrivals}
\hl{A}_k(t) = \hl{E}_k(t) + \sum_{i=1}^K D_{i,k}(t).
\end{equation}

\hl{
In our exposition we assume $Q_k(0) = 0$ and thus the queue length equation becomes
\begin{equation}
\label{eq:queueDynamics}
Q_k(t) = \hl{A}_k(t) - \sum_{j=0}^K D_{k,j}(t).
\end{equation}
Our results below can be generalized for cases where $Q_k(0)$ is at some fixed positive quantity or is random. For clarity of the exposition we omit these details.
}

In the treatment below, the vectors $Q, T, \hl{E, A}$ and $S$ (and their ``bar", ``hat" and ``tilde" versions as defined below) are treated as $K$-dimensional column vectors. Further, let $\Phi$ and $D$ be $K^2$ dimensional column vectors with the elements ordered in lexicographic order with the elements $D_{k,0}$ omitted. For example,
\[
D = \Big( D_{1,1}, \ldots,D_{1,K},D_{2,1},\ldots,D_{2,K},\ldots \ldots \ldots \ldots,D_{K,1},\ldots,D_{K,K} \Big)\tr.
\]

%%%%%%%%%%%%%%%%%%%%%%%%%%%%%%%%%%%
%%%%%%%%%%%%%%%%%%%%%%%%%%%%%%%%%%%
\subsection*{Probabilistic Assumptions}
%The {\em primitive processes} of our network model are $A(t)$, $S(t)$ and $\Phi(\ell)$. By this we mean that these processes are used to construct the probability space on which further network processes are defined. For simplicity we assume that $\hl{E}_k(t)$, $S_k(t)$ and $\Phi_{k,\cdot}(\ell)$ are independent processes. This can be easily relaxed to allow correlations between different arrivals, services and routing but we do not do so here. 

Throughout the paper, we make use of the following assumptions on the network primitives. Without loss of generality assume for some $1 \leq L \leq K$ that only the first $L$ classes have non-null exogenous arrivals, and for the other $K-L$ classes, the exogenous inter-arrival times are infinite (no arrivals). These are assumptions (A1)--(A4). Note that (A3) is not needed for our main result, but is needed to satisfy positive Harris recurrence (stability) results in general.
\begin{enumerate}
 \item [(A1)] $\{\xi_k({\hl{\ell}})\}_{{\hl{\ell}}=0}^\infty$  are i.i.d. sequences and mutually independent over all $k=1,\ldots,L$. Furthermore, independent of inter-arrival times, the sequences $\{\eta_k({\hl{\ell}})\}_{{\hl{\ell}}=0}^\infty$  are i.i.d. sequences and mutually independent over all $k=1,\ldots,K$.
% 
%  $\boldsymbol{\xi}_1, \dots , \boldsymbol{\xi}_L, \boldsymbol{\eta}_1,  \dots , \boldsymbol{\eta}_K$ are $i.i.d.$
%             sequences and mutually independent, \\ where $\boldsymbol{\xi_k} = \{\xi_k(i)\}_{i=0}^\infty$ and
%             $\boldsymbol{\eta_k} = \{\eta_k(i)\}_{i=0}^\infty$.
 \item[\hl{(A2)}]  $0< \mathbb{E}[(\xi_k(\hl{0}))^{\hl{3+\varepsilon}}] < \infty$ for $k = 1,\dots,L$ and
	     $0< \mathbb{E}[(\eta_k(\hl{0}))^{\hl{3+\varepsilon}}]< \infty$ for $k = 1,\dots, K$ \hl{and $\varepsilon > 0$}.
%\item[(A3)] For each class $k \le L$ there exists an integer $j_k > 0$ and some function $q_k(x) \geq 0$ on $\mathbb{R}_+$ with $\int_{0}^\infty q_k(x) \mathrm{d} x > 0$, such that
%$\mathbb{P}(\xi_{k}(1) \geq x) > 0$ for any $x > 0$, and 
%\[
%\mathbb{P}(\xi_{k}(1) + \dots + \xi_{k}( j_k) \in \mathrm{d} x) \geq q_k(x) \mathrm{d} x.
%\]
\item[\hl{(A3)}] Independent of inter-arrival times and service times, each vector $(\phi_{k,0}(\hl{\ell}),\ldots,\phi_{k,K}(\hl{\ell}))\tr$ for $k=1,\ldots,K$ and $\hl{\ell\ge 1}$ follows a multinomial distribution with a single success and probability vector $(p_{k,0},p_{k,1},\ldots,p_{k,K})\tr$ with $p_{k,j} \ge 0$, and  \hl{$p_{k,0} = (1- \sum_{j=1}^K p_{k,j})\ge 0$}. We denote by $P$ the $K \times K$ matrix of $p_{k,j},\, k,j=1,\ldots,K$. 
\end{enumerate}
Assumption (A1) is standard. Assumption (A2) \hl{yields finite moments and} is used for diffusion limits and uniform integrability. 
%Assumption (A3) is a technical assumption needed for the fluid stability framework. Observe that if we assume $\xi_k(1)$ has a density with unbounded support then (A3) automatically holds. 
For more explanation on this assumptions, see  \cite{bramsonBook2008}. Assumption \hl{(A3)} is the standard ``Bernoulli routing'' assumption implying that each $K+1$ dimensional vector $(\phi_{k,0}(\hl{\ell}),\ldots,\phi_{k,K}(\hl{\ell}))\tr$  has a single entry that is $1$ and $K$ zero entries.

Since $\xi$ and $\eta$ have finite second moments, they also have finite first moments and we denote,
\[
\hl{\alpha_k= \frac{1}{\mathbb{E}[\xi_k(0)]},}
\qquad
\mbox{and}
\qquad
\hl{\mu_k= \frac{1}{\mathbb{E}[\eta_k(0)]}.}
\]
We also denote by $\alpha$ the vector of $\alpha_{\hl{k}}$ and $\mu$ the vector of $\mu_{\hl{k}}$.% and $P$ the matrix of $p_{i,j}$.

%%%%%%%%%%%%%%%%%%%%%%%%%%%%%%%%%%%
%%%%%%%%%%%%%%%%%%%%%%%%%%%%%%%%%%%
\subsection*{Structural Network Assumptions}

We assume that the network is {\em open} and {\em stabilizable} via the following two assumptions:

\begin{enumerate}
\item[\hl{(A4)}]  The matrix $P$ has a spectral radius less than $1$ that is, $I-P\tr$ is non-singular.
\end{enumerate}
We now denote $\lambda = (I-P\tr)^{-1} \alpha$ and $\lambda_{i,j}:=\lambda_i \, p_{i,j}$. Now assume
\begin{enumerate}
\item[\hl{(A5)}] $\sum_{k \in {\cal C}_{\hl{j}}} \frac{\lambda_k}{\mu_k} < 1$ for every server ${\hl{j}}$.
\end{enumerate}

\noindent
An additional assumption that we make is that policies are {\em work conserving}. For this we denote the idle time process of server $j$ via,
\[
\hl{{\cal I}_j(t)} = t - \sum_{k \in C_j} T_k(t).%\int_0^t {\mathbf 1} \big\{ \sum_{k \in C_j} Q_k(s) = 0 \big \} \, dT_j(s).
\]
Now the work conserving assumption is 
\begin{enumerate}
\item[\hl{(A6)}] \hl{$\displaystyle \int_0^t \Big(\sum_{k \in C_j} Q_k(u)\Big) \,\hl{d\, {\cal I}_j(u)} = 0$} for all $t \ge 0$ and all servers $j$.
\end{enumerate}

Assumption \hl{(A4)} means that the network is open. Assumption \hl{(A5)} is used for stability, a concept that we discuss in further detail below. The assumption implies that there is enough capacity in the network. If it is violated, then it is easy to show that the network cannot be stabilized, \cite{bramsonBook2008}. As opposed to that under \hl{(A5)}, much research has gone into finding scheduling policies that stabilize the network. For general MCQN such policies exist, see for example \cite{bramson2016proportional} \hl{or \cite{Weiss21}} and references therein. In the case of generalized Jackson networks (single class meaning that $|C_j| = 1$ for all servers $j$), under \hl{(A5)} and \hl{(A6)} networks are stable, see for example \cite{baccelli1994ergodicity}.

%%%%%%%%%%%%%%%%%%%%%%%%%%%%%%%%%%%
%%%%%%%%%%%%%%%%%%%%%%%%%%%%%%%%%%%
\subsection*{Scaling Limits}
For $n=1,2,\ldots$ and a function $U(t)$,
%(being $Q, T, A, S, \Phi$ or $D$),
denote $\overline{U}^n(t) = U( nt )/n$. We say that a fluid limit of $U$ exists if $\lim_{n \to \infty} \overline{U}^n(t) = \overline{U}(t)$ exists uniformly on compact sets (u.o.c) almost surely. Further, when the limit $\overline{U}(t)$ exists, denote,
\begin{equation}
\label{eq:defDifScale}
\widehat{U}^n(t) = \frac{U(nt) -  \overline{U}(n t)}{\sqrt{n}},
\quad
n=1,2,\ldots.
\end{equation}
In cases where the above sequence converges weakly on Skorohod $J_1$ topology to a limiting process, $\widehat{U}(t)$, we denote,
\[
\widehat{U}^n \Rightarrow \widehat{U}.
\]
For discrete time processes replace $U(nt)$ by $U(\lfloor nt \rfloor)$. See \cite{bookChenYao2001}, Chapter~5 for brief background of weak convergence in the context of queueing networks. An extensive treatment is in \cite{bookWhitt2001}.

As a consequence of the assumptions (A1) and (A2) (for first moments), the primitive processes satisfy a functional strong law of large numbers (FSLL) yielding fluid limits $\hl{\overline{E}}_{\hl{k}}(t) = \alpha_{\hl{k}} t$ and $\overline{S}_{\hl{k}}(t) = \mu_{\hl{k}} t$. Further, from \hl{(A3)}, $\overline{\Phi}_{i,j}(\ell) = p_{i,j} \ell$. 

As a consequence of assumptions \hl{(A1) and (A2)} the primitive processes satisfy functional central limit theorems (FCLT). Specifically $\hl{\widehat{E}}_k(t)$ are Brownian motions with diffusion coefficients \hl{(also sometimes known as volatility coefficients)},
\begin{equation}
\label{eq:scvFCLTarrivals}
v_k= \alpha_k c_k^2,
\qquad
\mbox{where}
\qquad
c_k^2 =\frac{\mathbb{E}[\xi_k(\hl{0})^2]}{  \mathbb{E}[\xi_k(\hl{0})]^2} - 1.
\end{equation}
Similar diffusion limits exists for the service processes however these do not play a role in our limiting results. The routing processes also have diffusion limits due to \hl{(A3)}. We have that, 
\begin{equation}
\label{eq:PhiFCLTStuff}
\widehat{\Phi}_{k,\cdot} (t) = \Big(\widehat{\Phi}_{k,1}(t),\ldots, \widehat{\Phi}_{k,K}(t) \Big)%\tr
,
\quad
 k=1,\ldots,K,
\end{equation}
 are $K$-dimensional Brownian motions with covariance matrices $\Gamma_k$, having the $i,j$'th entry $p_{k,i}(\delta_{i,j}-p_{k,j})$, where $\delta_{i,j}$ is the Kronecker delta. See \cite{chen1991stochastic} or \cite{bookWhitt2001}.
All these results are for primitive processes, our theorem deals with scaling limits of the flows.

%The assumptions on $\overline{E}_i(t)$ and $\hl{\widehat{E}}_i(t)$ above are then satisfied when the inter-arrival times have finite second moments $\alpha_i^{-3}v_i^2+\alpha_i^{-2}$. In the case of Bernoulli routing, the existence of $\overline{\Phi}(\cdot)$ and $\widehat{\Phi}(\cdot)$ as specified above follows from the standard FSSLN and FCLT.

%and service processes generated by i.i.d. sequences with finite second moments ($\alpha_i^{-3}v_i^2+\alpha_i^{-2}$ and %$\mu_i^{-2}(s_i^2+1)$ respectively),  and Bernoulli routing, the FSLLN and FCLT laws as described above hold.

%%%%%%%%%%%%%%%%%%%%%%%%%%%%%%%%%%%%%%
%%%%%%%%%%%%%%%%%%%%%%%%%%%%%%%%%%%%%%
%%%%%%%%%%%%%%%%%%%%%%%%%%%%%%%%%%%%%%
%%%%%%%%%%%%%%%%%%%%%%%%%%%%%%%%%%%%%%
%%%%%%%%%%%%%%%%%%%%%%%%%%%%%%%%%%%%%%
%%%%%%%%%%%%%%%%%%%%%%%%%%%%%%%%%%%%%%
\subsection*{Stability Using Fluid Models}

We now briefly give an overview of the fluid stability framework, deferring details to the literature for the sake of brevity. For a MCQN and a scheduling policy, we can associate a set of deterministic equations called the {\em fluid model (equations)}. Such equations are spelled out in detail in \cite{bramsonBook2008} page 104, equations (4.50) - (4.55). The fluid model description in \cite{bramsonBook2008} summarizes key ideas from \cite{chen1995faa}, \cite{Dai108},  \cite{DaiMeyn0337} and others. In general, each scheduling policy may induce a different set of equations. Examples are in \cite{bramsonBook2008}. A key object in the fluid model equations is the queue fluid limit, $\{Z(t)\}_{t \ge 0}$, a $K$ dimensional vector (using the notation of \cite{bramsonBook2008}).

The concept of {\em fluid model stability} then requires that there exist some finite time $t^*$ such that if $\sum_{k=1}^K Z_k(0) = 1$ then $\sum_{k=1}^K Z_k(t) = 0$ for $t \ge t^*$. Much of the literature on MCQN has dealt with proving fluid model stability associated with different networks and scheduling policies, see \cite{bramsonBook2008}. A key result in \cite{Dai108} (also summarized in \cite{bramsonBook2008}) connects fluid model stability to positive Harris recurrence of the associated Markov process describing the MCQN. In general the most accepted stability notion of a stochastic MCQN is positive Harris recurrence. Of notable mention are generalized Jackson networks (single class) which are stable under any work conserving policy and assumptions \hl{(A4)--(A5)}. Our main theorem requires the fluid model of the network to be stable.

\hl{Assumptions (A1)--(A3) and an additional technical assumption requiring the inter-arrival times to be unbounded and spread-out (see for example (1.4) and (1.5) in \cite{Dai108}) can be used to show that a stable fluid model yields positive harris recurrence of the network. In this paper, we don't explicitly require positive harris recurrence, and hence such an additional technical assumption is not needed. }

A related stability notion that we use in the sequel is weak stability. The fluid model is {\em weakly stable} if when $Z(0) = 0$ then $Z(t) = 0$ for all $t \ge 0$.  Weak stability clearly follows from fluid model stability since our networks are time-homogenous.

\subsection*{Main Result}

We now set up some matrices and vectors used in our main theorem. Use $\mathbf{1}$ to denote the $K$-dimensional vector of ones and define the $K \times K^2$ matrix $B :=  \mathbf{1}\tr \otimes I$ where $\otimes$ is the Kronecker product \hl{and here $I$ is the $K \times K$ identity matrix}. Further denote the $K^2 \times K$ matrix, 
\[
P_c :=
\begin{bmatrix}
P\tr \, e_{1,1}  \\
P\tr\, e_{2,2}  \\
\vdots \\
P\tr\, e_{K,K}
\end{bmatrix},
\]
where $e_{i,j}$ is a $K \times K$ matrix with all entries $0$ except for the $i,j$'th entry being $1$. Now define the $K\times (K + K^2)$ matrix $G$ and the $K^2 \times (K+K^2)$ matrix $H$, respectively, as,
\begin{align}
G &:= \begin{bmatrix}
    (I-P\tr)^{-1} &      (I-P\tr)^{-1}B
    \end{bmatrix}, \label{eqn:G_def}\\
H &:=
\begin{bmatrix}
P_c(I-P\tr)^{-1}
&
I_{K^2} + P_c (I-P\tr)^{-1}B
\end{bmatrix}\label{eqn:H_def}.
\end{align}
Also define the $(K+K^2) \times (K+K^2)$ covariance matrix for the exogenous arrival processes and the routing processes,
\begin{align}
\Sigma^{(P)}
:=
\begin{bmatrix}
\mbox{diag}(v_k^2) & & & 0\\
&\lambda_1 \Gamma_1 &          &                                              \\
 &        & \ddots   &                                                  \\
0  &       &          & \lambda_K \Gamma_K                               \\
\end{bmatrix},
\label{eqn:Sigma_P}
\end{align}
where $\mbox{diag}(v_k^2)$ is a diagonal matrix with elements $v_k^2$. Further, for any $i,j \in \{1,\ldots,K\}$ define the $K$ dimensional vector $m(i,j)$ as follows:
\begin{equation}
\label{eq:MijFirst}
m(i,j) := (I-P)^{-1} e_{i,i} P_{\cdot,j},
\end{equation}
where $P_{\cdot,j}$ is the $j$'th column of $P$. As further elaborated on in Section~\ref{sec:zerTime}, the $k$'th entry of the column vector $m(i,j)$ is the expected number of transitions from state $i$ to state $j$ in a Markov chain whose transient component is specified by $P$ and initial state is set to $k$.

We now present our main result. Relationships to the results of \cite{kim2011modeling} are in Section~\ref{sec:example}. %\cmt{Note that the result does not require (A3), which would generally be required to show positive Harris recurrence.}

%%%%%%%%%%%%%%%%%%%%%%%%%%%%%%%%%%%%%%%%%%%%%%%
%%%%%%%%%%%%%%%%%%%%%%%%%%%%%%%%%%%%%%%%%%%%%%%
%%%%%%%%%%%%%%%%%%%%%%%%%%%%%%%%%%%%%%%%%%%%%%%
%%%%%%%%%%%%%%%%%%%%%%%%%%%%%%%%%%%%%%%%%%%%%%%
%%%%%%%%%%%%%%%%%%%%%%%%%%%%%%%%%%%%%%%%%%%%%%%
%%%%%%%%%%%%%%%%%%%%%%%%%%%%%%%%%%%%%%%%%%%%%%%
\begin{theorem}
\label{thm:main}

Consider a multi-class queueing network and assume \hl{(A1)--(A6)} hold. If the fluid model of the network (incorporating the scheduling policy) is stable, then% then as shown in the literate QQQQ, the queueing network is positive harris recurrent, and in addition,\\[1pt]

(i) The sequences $\hl{\widehat{A}}^n$ and $\widehat{D}^n$ converge weakly to drift-less Brownian motion processes with covariance matrices,
\begin{equation}
\label{eq:covExpression}
\Sigma^{(\hl{A})} :=
G\,
\Sigma^{(P)}
\,
G\tr,
\quad
\mbox{and}
\quad
\Sigma^{(D)} := H
\,
\Sigma^{(P)}
\,
H\tr,
\end{equation}
respectively.

\vspace{10pt}

(ii) The asymptotic variability parameters, as defined in \eqref{eq:Sig1} and \eqref{eq:Sig2}, can be read off from the diffusion parameters. Namely,
\[
\sigma_{i_1 \to j_1, i_2 \to j_2} = \Sigma^{(D)}_{(i_1-1)K+j_1,~(i_2-1)K + j_2},
\qquad
\sigma_{i,j} = \Sigma^{(\hl{A})}_{i,j}.
\]

\vspace{10pt}

(iii) An alternative calculation for the asymptotic variability parameters is,
\begin{eqnarray}
\label{eq:mainThmiii-1}
\sigma_{i_1 \to j_1, i_2 \to j_2} &=& 
m_{j_1}(i_2,j_2)  \alpha\str m(i_1,j_1)  + m_{j_2}(i_1,j_1) \alpha\str  m(i_2,j_2)\\
\nonumber
&&+ (v^2 - \alpha)\str \big(m(i_1,j_1) \bullet m(i_2,j_2) \big) , \\
\label{eq:mainThmiii-1_equal}
\sigma_{i \to j}^2 &=& 
(1+2 m_{j}(i,j))  \alpha\str m(i,j)  + (v^2 - \alpha)\str \big(m(i,j) \bullet m(i,j) \big) , \\
\label{eq:mainThmiii-2}
\sigma_{i,j} &=&v_{i}^2 \sum_{k=1}^K m_{i}(k,j) + v_{j}^2 \sum_{k=1}^K m_{j}(k,i)
+ \sum_{k_1=1}^K \sum_{k_2=1}^K \sigma_{k_1 \to i, k_2 \to j}
\end{eqnarray}
where $m_k(i,j)$ is the $k$-th entry of the vector $m(i,j)$ and $(x \bullet y)$ signifies the vector resulting from element-wise product of the vectors $x$ and $y$.
%where $\big(x)_j$ is the $j$'th entry of a vector $x$ and $(x \bullet y)$ signifies the vector resulting from element-wise product of the vectors $x$ and $y$.
\end{theorem}

Since the result may appear quite technical, we demonstrate the applicability on a specific network example in Section~\ref{sec:example}. %The reader may wish to refer to that section now.

The remainder of the paper establishes the proof. For this we now put forward foundations utilizing several results from the literature. Specifically equations \eqref{eq:barQis0}, \eqref{eq:barTdoesTheJob}, \eqref{eq:momentCondMeynDai}, and \eqref{eq:AUIass}, that we obtain now, are used in the proofs.

 Using weak stability, and assumptions \hl{(A4)--(A6)}, Theorem~4.1 in \cite{chen1995faa} ensures $\overline{Q}(t)$ and $\overline{T}(t)$ exist and for any class~$k$, %that states that if a multiclass queueing network under a work-conserving service discipline has the fluid limit 
%\[
%\left( \overline{Q}_k^n(t), \overline{T}_k^n(t) \right) \longrightarrow \left( \overline{Q}_k(t), \overline{T}_k(t)  \right)~~~ u.o.c, ~~~ \text{ as}~ n \to \infty,
%\]
%for each $k = 1, \dots, K$, and (A6) holds.  Then 
\begin{equation}
\label{eq:barQis0}
\overline{Q}_k(t) = 0,
\end{equation}
and
\begin{equation}
\label{eq:barTdoesTheJob}
\overline{T}_k(t) = \frac{\lambda_k}{\mu_k} t.
\end{equation}
Using fluid model stability and assumptions \hl{(A1) and (A2)}, Theorem~4.1 (ii) in \cite{DaiMeyn0337} states that for every initial state $x$ of the associated Markov process of the network (and policy),
 \begin{equation}
 \label{eq:momentCondMeynDai}
 \lim_{t \to \infty} \mathbb{E}_x \left[Q_k(t)^{\hl{2+\varepsilon}} \right]  %= 
 %\mathbb{E}_\pi \left[Q_k(0)^r \right] 
 \leq c,
 \end{equation}
 for some constant $c$, any class $k$, \hl{and some $\epsilon$ as in (A2)}.

We now refer to equation (5.18) on page 60 of \cite{gut2009stopped}. We have that under assumptions (A1) and (A2) for the arrival process, for each class $k$ and some $t_0>0$,
\begin{equation}
\label{eq:AUIass}
\left\{ 
\frac{(\hl{E}_k(t) - \alpha_k t)^2}{t}, \, t \ge t_0
\right\}
\quad
\mbox{is Uniformly Integrable}.
\end{equation}

The proof is structured as follows:
(i) is established in Section~\ref{sec:difflimit}. (iii) is established in Section~\ref{sec:zerTime}. (ii) relies on the development of (iii) and is established in Section~\ref{sec:asympVarAndUI}.

%%%%%%%%%%%%%%%%%%%%%%%%%%%%%%%%%%%%%%%%%%%%%%%%%%
%%%%%%%%%%%%%%%%%%%%%%%%%%%%%%%%%%%%%%%%%%%%%%%%%%
%%%%%%%%%%%%%%%%%%%%%%%%%%%%%%%%%%%%%%%%%%%%%%%%%%
%%%%%%%%%%%%%%%%%%%%%%%%%%%%%%%%%%%%%%%%%%%%%%%%%%
%%%%%%%%%%%%%%%%%%%%%%%%%%%%%%%%%%%%%%%%%%%%%%%%%%
%%%%%%%%%%%%%%%%%%%%%%%%%%%%%%%%%%%%%%%%%%%%%%%%%%
%%%%%%%%%%%%%%%%%%%%%%%%%%%%%%%%%%%%%%%%%%%%%%%%%%
%%%%%%%%%%%%%%%%%%%%%%%%%%%%%%%%%%%%%%%%%%%%%%%%%%
%%%%%%%%%%%%%%%%%%%%%%%%%%%%%%%%%%%%%%%%%%%%%%%%%%

\section{The Diffusion Parameters}
\label{sec:difflimit}

Using the scaling definition \eqref{eq:defDifScale}, equations \eqref{eq:phiTotal},  \eqref{eq:queueDynamics} and \eqref{eq:queueArrivals} are easily manipulated for all \hl{$k=1,\ldots,K$ and $n \ge 1$} to yield,
\begin{align}
\label{eq:Ddiff}
0 &= \hl{\sum_{j=0}^K \widehat{\Phi}^n_{k,j}(\ell)},
\quad
\hl{\ell}=1,2,\ldots,\\
\label{eq:currentLaw}
\widehat{Q}^n_k(t) &= \hl{\widehat{E}}^n_k(t) + \sum_{\hl{i}=1}^K \widehat{D}^n_{\hl{i},k}(t)
- \sum_{j=0}^K \widehat{D}^n_{k,j}(t),
\quad
t\ge 0,\\
\label{eq:Ediff}
\hl{\widehat{A}}^n_k(t) &= \hl{\widehat{E}}^n_k(t) + \sum_{i=1}^K \widehat{D}_{i,k}^n(t),
\quad
t\ge 0.
\end{align}

Observe that, the property $\overline{T}_k(t) = \frac{\lambda_k}{\mu_k} t$ implies,
\begin{equation}
\label{eq:DnLim}
\lim_{n \to \infty} \overline{D}_{i,j}^n(t) :=\overline{D}_{i,j}(t)=\overline{\Phi}_{i,j}( \overline{S}_i( \overline{T}_i(t)))=p_{i,j} \lambda_i t,\,\,\,\mbox{u.o.c.}.
\end{equation}

Lemmas~\ref{lem:1}--\ref{lem:4} below summarize straight forward algebraic manipulations of these equations. Then this leads to a simple diffusion limit that follows from Donsker's theorem (see \cite{bookChenYao2001}, Chapters~5-7 or  \cite{Glynn0518} for background). Techniques similar to those employed here are also in \cite{nazarathy2010positive}, applied to queueing networks that generate their own input.
The basic idea is to represent the diffusion scaled processes, $\widehat{D}^n$ and $\widehat{T}^n$ in-terms of the following ``tilde" processes,
\[
\widetilde{\Phi}_{i,j}^n(t) := \widehat{\Phi}_{i,j}^n\Big(\overline{S}_i^n \big(\overline{T}_i^n(t)\big)\Big),
\quad
\mbox{and}
\quad
 \widetilde{S}_k^n(t) := \widehat{S}_k^n(\overline{T}_k^n(t)),
\]
which in-turn have diffusion limits based on the primitive processes.
%
%%%%%%%%%%%%%%%%%%%% Lemma %%%%%%%%%%%%%%%
\begin{lemma}
\label{lem:1}
For $i=1,\ldots,K$ and $j=0,\ldots,K$,
\begin{eqnarray}
\label{eq:depDecompose}
\widehat{D}^n_{i,j}(t) &=& \widetilde{\Phi}^n_{i,j}( t )
+ p_{i,j} \widetilde{S}^n_i(t) +
p_{i,j} \mu_i \widehat{T}^n_i(t).
\end{eqnarray}
\end{lemma}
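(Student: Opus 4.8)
This identity is an exact pathwise relation that holds for every fixed $n$, so the plan is purely algebraic: no weak-convergence or continuity arguments enter at this stage. I would begin from the definition $D_{i,j}(t)=\Phi_{i,j}(S_i(T_i(t)))$ together with the fluid limit $\bar D_{i,j}(t)=p_{i,j}\nu_i t$ recorded in \eqref{eq:DnLim}, and write the diffusion scaling out directly:
\[
\hat D^n_{i,j}(t)=\frac{1}{\sqrt n}\Big(\Phi_{i,j}\big(S_i(T_i(nt))\big)-p_{i,j}\nu_i nt\Big).
\]

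Next I would perform a three-fold telescoping decomposition of the numerator, peeling off one source of fluctuation at a time. Writing $s:=S_i(T_i(nt))$, first add and subtract $p_{i,j}s$ to separate the routing fluctuation $\Phi_{i,j}(s)-p_{i,j}s$ from $p_{i,j}\big(s-\nu_i nt\big)$; then, inside the second piece, add and subtract $\mu_i T_i(nt)$ to split the service fluctuation $S_i(T_i(nt))-\mu_i T_i(nt)$ from the work fluctuation $\mu_i\big(T_i(nt)-\frac{\nu_i}{\mu_i}nt\big)$. This produces three centered differences, each of which is a centered primitive process evaluated at a (prelimit) fluid-scaled random argument.

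The remaining step is to recognize each centered difference as the advertised tilde/hat process by matching scalings. The relations I would use are $\bar T^n_i(t)=T_i(nt)/n$ and $\bar S^n_i(\bar T^n_i(t))=S_i(T_i(nt))/n$, together with the fluid identities $\bar\Phi_{i,j}(\ell)=p_{i,j}\ell$, $\bar S_i(u)=\mu_i u$, and $\bar T_i(t)=\frac{\nu_i}{\mu_i}t$, the last being assumption (A1). Applying $U(m)-\bar U(m)=\sqrt n\,\hat U^n(m/n)$ to each centered difference and tracking the $\sqrt n$ factors, the routing term becomes $\hat\Phi^n_{i,j}\big(\bar S^n_i(\bar T^n_i(t))\big)=\tilde\Phi^n_{i,j}(t)$, the service term becomes $p_{i,j}\hat S^n_i(\bar T^n_i(t))=p_{i,j}\tilde S^n_i(t)$, and the work term becomes $p_{i,j}\mu_i\hat T^n_i(t)$, which is exactly \eqref{eq:depDecompose}.

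The computation has no genuine obstacle; the only point requiring care is the bookkeeping of the nested scalings — in particular, verifying that the prelimit fluid-scaled argument of the routing increment is precisely $\bar S^n_i(\bar T^n_i(t))$, so that the intended tilde process (and not some other composition) appears, and confirming that the centering constant $p_{i,j}\nu_i nt$ is correctly distributed across the three telescoped terms through the fluid identities above. Since the relation is an exact, pathwise identity, I would present it as a chain of equalities rather than an approximation, deferring all convergence statements to the subsequent lemmas.
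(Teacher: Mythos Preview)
Your proposal is correct and follows essentially the same route as the paper: both start from $\hat D^n_{i,j}(t)=\big(\Phi_{i,j}(S_i(T_i(nt)))-p_{i,j}\nu_i nt\big)/\sqrt n$, perform the same three-term telescoping by adding and subtracting $p_{i,j}\,n\bar S^n_i(\bar T^n_i(t))$ and $p_{i,j}\mu_i\,n\bar T^n_i(t)$, and then identify the three centered pieces with $\tilde\Phi^n_{i,j}(t)$, $p_{i,j}\tilde S^n_i(t)$, and $p_{i,j}\mu_i\hat T^n_i(t)$ via the scaling identities. The only cosmetic difference is that the paper writes the random argument as $n\bar S^n_i(\bar T^n_i(t))$ from the outset, while you introduce the shorthand $s=S_i(T_i(nt))$; the underlying algebra is identical.
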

\proof{}
Use $D_{i,j}(nt) = \Phi_{i,j}(S_i(T_i(nt))) = \Phi_{i,j}(n \overline{S}_i^n( \overline{T}_i^n(t)))$ and \eqref{eq:DnLim} to get,
\begin{eqnarray*}
\widehat{D}^n_{i,j}(t)
&=& \frac{\Phi_{i,j}(n \overline{S}_i^n( \overline{T_i}^n(t)))- p_{i,j} \lambda_i n t}{\sqrt{n}} \\
&=& \frac{\Phi_{i,j}(n \overline{S}_i^n( \overline{T_i}^n(t)))-  p_{i,j} n\overline{S}_i^n(\overline{T}_i^n(t))}{\sqrt{n}}
+ \frac{  p_{i,j} n \overline{S}_i^n(\overline{T}_i^n(t)) -  p_{i,j} \mu_i n \overline{T}_i^n(t)}{\sqrt{n}} \\
&& + \frac{   p_{i,j} \mu_i n \overline{T}_i^n(t) - p_{i,j} \lambda_i n t}{\sqrt{n}}.
\end{eqnarray*}
Now, (\ref{eq:depDecompose}) follows.

\endproof

Denote by $M$ the diagonal matrix with diagonal elements $\mu_k^{-1}$. We now have,

%%%%%%%%%%%%%%%%%%%%%%%%%%%%%%%%%%%%%%%%%%
%%%%%%%%%%%%%%%%%%%%%%%%%%%%%%%%%%%%%%%%%%
%%%%%%%%%%%%%%%%%%%%%%%%%%%%%%%%%%%%%%%%%%
%%%%%%%%%%%%%%%%%%%%%%%%%%%%%%%%%%%%%%%%%%
%%%%%%%%%%%%%%%%%%%% Lemma%% %%%%%%%%%%%%%
\begin{lemma}
\label{lem:2}
The diffusion scaled time allocation can be written as:
\hl{\begin{equation}
\label{eq:t-hat}
\widehat{T}^n(t) =
M (I-P\tr)^{-1}
\Big(
\hl{\widehat{E}}^n(t) +  B \widetilde{\Phi}^{n}(t)  -
   \widehat{Q}^n(t)
\Big) - M \widetilde{S}^n(t).
\end{equation}}
\end{lemma}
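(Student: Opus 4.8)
The plan is to begin from the diffusion-scaled current law \eqref{eq:currentLaw}, rewrite it in vector form, and then eliminate $\hat{D}^n$ in favour of $\hat{T}^n$ by inserting the decomposition of Lemma~\ref{lem:1}. Solving the resulting linear relation for $\hat{T}^n$ yields \eqref{eq:t-hat}. The only non-mechanical ingredients are the vectorized form of Lemma~\ref{lem:1}, the routing constraint \eqref{eq:Ddiff}, and two Kronecker-product identities for $B$ and $P_c$.

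First I would record the matrix version of Lemma~\ref{lem:1}. Reading off the block structure of $P_c$ (its $k$-th block is $P'e_{k,k}$, whose only non-zero column is the $k$-th, equal to the $k$-th row of $P$), one checks that the scalar identity \eqref{eq:depDecompose} is exactly the coordinate form of
\[
\hat{D}^n = \tilde{\Phi}^n + P_c\big(\tilde{S}^n + M^{-1}\hat{T}^n\big),
\]
where $M^{-1}$ is the diagonal matrix with entries $\mu_k$. Next I would treat the two sums in \eqref{eq:currentLaw} separately. For the departures from queue $k$, summing \eqref{eq:depDecompose} over $j=0,\ldots,K$ and using $\sum_{j=0}^K \tilde{\Phi}^n_{k,j}=0$ (which is \eqref{eq:Ddiff} evaluated at the fluid-scaled argument) together with $\sum_{j=0}^K p_{k,j}=1$ collapses the sum to $\tilde{S}^n_k + \mu_k\hat{T}^n_k$, i.e. $\tilde{S}^n + M^{-1}\hat{T}^n$ in vector form. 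For the arrivals, $\sum_{i=1}^K\hat{D}^n_{i,k} = (B\hat{D}^n)_k$, and I would invoke the identity $BP_c = P'$ (immediate from $B=\mathbf{1}'\otimes I$ being the row of identity blocks, so $BP_c=\sum_k P'e_{k,k}=P'$) to obtain $B\hat{D}^n = B\tilde{\Phi}^n + P'(\tilde{S}^n+M^{-1}\hat{T}^n)$.

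Substituting both expressions into the vectorized current law and collecting the $\tilde{S}^n$ and $M^{-1}\hat{T}^n$ terms produces a common factor $I-P'$, giving
\[
\hat{Q}^n = \hat{A}^n + B\tilde{\Phi}^n - (I-P')\tilde{S}^n - (I-P')M^{-1}\hat{T}^n.
\]
Since $I-P'$ is non-singular under the standing spectral-radius assumption, I would isolate $M^{-1}\hat{T}^n$ and left-multiply by $M$ to arrive at \eqref{eq:t-hat}. The step that needs genuine care, rather than being purely mechanical, is the bookkeeping of the omitted $j=0$ column: the departure sum $\sum_{j=0}^K$ ranges over the full routing alphabet (including the exit flow $D_{k,0}$, absent from the $K^2$-vector $D$), whereas the arrival sum and $\hat{D}^n$ itself only involve $j=1,\ldots,K$. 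Keeping these two index ranges straight is precisely what makes the $\tilde{\Phi}^n$ contribution vanish from the departures while $B\tilde{\Phi}^n$ survives in the arrivals, so I would be careful to apply \eqref{eq:Ddiff} and \eqref{eq:phiTotal} exactly at that point.
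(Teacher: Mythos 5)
Your proposal is correct and takes essentially the same route as the paper: substitute Lemma~\ref{lem:1} into \eqref{eq:currentLaw}, collapse the departure sum via \eqref{eq:Ddiff} and $\sum_{j=0}^K p_{k,j}=1$, arrive at the vector identity $\hat{Q}^n = \hat{A}^n + B\tilde{\Phi}^n - (I-P')\tilde{S}^n - (I-P')M^{-1}\hat{T}^n$, and left-multiply by $M(I-P')^{-1}$. The only cosmetic difference is that you vectorize the arrival sum early using the identity $BP_c = P'$, whereas the paper collects the same terms coordinate-wise and passes to matrix form at the end.
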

\proof{}
Substituting (\ref{eq:depDecompose}) into (\ref{eq:currentLaw}) we have:
\begin{eqnarray*}
\widehat{Q}^n_k(t) &=& \hl{\widehat{E}}^n_k(t) + \sum_{\hl{i}=1}^K
\left(\widetilde{\Phi}^n_{\hl{i},k}(t)
+ p_{\hl{i},k} \widetilde{S}^n_{\hl{i}}(t) +
p_{\hl{i},k} \mu_{\hl{i}} \widehat{T}_{\hl{i}}^n(t)\right)  \\
&& - \sum_{j=0}^K
\left(\widetilde{\Phi}^n_{k,j}(t)
+ p_{k,j} \widetilde{S}^n_k(t) +
p_{k,j} \mu_k \widehat{T}^n_k(t)\right) \\
&=&
\hl{\widehat{E}}^n_k(t) + \sum_{\hl{i}=1}^K
\left(\widetilde{\Phi}^n_{\hl{i},k}(t)
+ p_{\hl{i},k} \widetilde{S}^n_{\hl{i}}(t) +
p_{\hl{i},k} \mu_{\hl{i}} \widehat{T}_{\hl{i}}^n(t)\right) - \widetilde{S}^n_k(t) - \mu_k \widehat{T}^n_k(t) \\
&=&
\hl{\widehat{E}}_k^n(t) + \sum_{{\hl{i}}=1}^K \widetilde{\Phi}_{{\hl{i}},k}^n(t) -
 \big( \widetilde{S}_k^n(t)  - \sum_{{\hl{i}}=1}^K p_{{\hl{i}},k} \widetilde{S}_{\hl{i}}^n(t)  \big)
- \big( \mu_k \widehat{T}_k^n(t)  - \sum_{{\hl{i}}=1}^K p_{{\hl{i}},k} \mu_k \widehat{T}_{\hl{i}}^n(t)  \big),
\end{eqnarray*}
where in the second step we used (\ref{eq:Ddiff}) and $\sum_{j=0}^K p_{i,j} =1$. In vector/matrix form this reads:
\begin{eqnarray*}
\widehat{Q}^n(t) &=&
\hl{\widehat{E}}^n(t) +
B {\widetilde{\Phi}^n(t)}
- (I -  P\tr) \widetilde{S}^n(t) - (I -  P\tr) M^{-1}  \widehat{T}^n(t).
\end{eqnarray*}
Now (\ref{eq:t-hat}) follows by multiplying both sides by $M (I-P\tr)^{-1}$.

\endproof
We now have,

%%%%%%%%%%%%%%%%%%%%%%%%%%%%%%%%%%%%%%%%%%
%%%%%%%%%%%%%%%%%%%%%%%%%%%%%%%%%%%%%%%%%%
%%%%%%%%%%%%%%%%%%%%%%%%%%%%%%%%%%%%%%%%%%
%%%%%%%%%%%%%%%%%%%%%%%%%%%%%%%%%%%%%%%%%%
%%%%%%%%%%%%%%%%%%%% Lemma %%%%%%%%%%%%%%%
\begin{lemma}
\label{lem:3}
\[
\widehat{D}^n(t) =
\begin{bmatrix}
H & 0_{K \times K} 
\end{bmatrix}
\begin{bmatrix}
\hl{\widehat{E}}^n(t) \\
\widetilde{\Phi}^n(t) \\
\widehat{S}^n(t) 
\end{bmatrix}
-P_c\, (I-P\tr)^{-1} \widehat{Q}^n(t).
\]
\end{lemma}
\proof{}
Equations (\ref{eq:depDecompose}) are,
\[
\widehat{D}^n(t) = \widetilde{\Phi}^n(t) + P_c \, \Big( \widetilde{S}^n(t) + M^{-1} \widehat{T}^n(t) \Big).
\]
Substituting (\ref{eq:t-hat}) in the above, $\widetilde{S}^n(t)$ drops out of the equation, and we obtain,
\[
\widehat{D}^n(t) = \Big(I_{K^2} + P_c \,(I-P\tr)^{-1}B \Big)\widetilde{\Phi}^n(t) + P_c\,(I-P\tr)^{-1} \hl{\widehat{E}}^n(t) - P_c(I-P\tr)^{-1} \widehat{Q}^n(t). 
\]
\endproof

Observe from Lemma~\ref{lem:3} that $\widehat{D}^n$ depends on $\widehat{S}^n$ only through $\widehat{Q}^n$. We may now represent the analogous result for $\hl{\widehat{A}}^n$, this time omitting the primitive sequence $\widehat{S}^n$ from the representation.

%%%%%%%%%%%%%%%%%%%%%%%%%%%%%%%%%%%%%%%%%%
%%%%%%%%%%%%%%%%%%%%%%%%%%%%%%%%%%%%%%%%%%
%%%%%%%%%%%%%%%%%%%%%%%%%%%%%%%%%%%%%%%%%%
%%%%%%%%%%%%%%%%%%%%%%%%%%%%%%%%%%%%%%%%%%
%%%%%%%%%%%%%%%%%%%% Lemma %%%%%%%%%%%%%%%
\begin{lemma}
\label{lem:4}
\[
\hl{\widehat{A}}^n(t) =
G
\left[
\begin{array}{c}
\hl{\widehat{E}}^n(t) \\
\widetilde{\Phi}^n(t) 
\end{array}
\right]
- B\, P_c(I-P\tr)^{-1} \widehat{Q}^n(t),
\]
\end{lemma}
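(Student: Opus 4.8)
The plan is to reduce Lemma~\ref{lem:4} to Lemma~\ref{lem:3} by a single substitution, after first casting the flow-aggregation relation \eqref{eq:Ediff} into vector/matrix form. First I would verify that, under the lexicographic ordering of $\hat{D}^n$ (with the $\hat{D}^n_{k,0}$ entries omitted), the $K \times K^2$ matrix $B = \mathbf{1}' \otimes I$ acts precisely as the operator forming, in its $k$-th coordinate, the sum $\sum_{i=1}^K \hat{D}^n_{i,k}(t)$. Indeed $\mathbf{1}' \otimes I = [\,I_K ~ I_K ~ \cdots ~ I_K\,]$ ($K$ horizontal copies of $I_K$), so applying it to the stacked vector whose $i$-th block is $(\hat{D}^n_{i,1},\ldots,\hat{D}^n_{i,K})'$ yields exactly the coordinatewise sum over $i$. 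Consequently \eqref{eq:Ediff} reads
\[
\hat{E}^n(t) = \hat{A}^n(t) + B\,\hat{D}^n(t).
\]

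Next I would substitute the expression for $\hat{D}^n(t)$ supplied by Lemma~\ref{lem:3}. Because the coefficient matrix there is $[\,H ~~ 0_{K\times K}\,]$, the service-scaling block $\hat{S}^n(t)$ is annihilated and never enters; what remains is
\[
\hat{D}^n(t) = H \left[\begin{array}{c} \hat{A}^n(t) \\ \tilde{\Phi}^n(t) \end{array}\right] - P_c(I-P')^{-1}\hat{Q}^n(t).
\]
Applying $B$ and adding $\hat{A}^n(t)$ then gives
\[
\hat{E}^n(t) = \hat{A}^n(t) + B H \left[\begin{array}{c} \hat{A}^n(t) \\ \tilde{\Phi}^n(t) \end{array}\right] - B P_c(I-P')^{-1}\hat{Q}^n(t).
\]

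Finally I would absorb the free term $\hat{A}^n(t)$ into the matrix coefficient. Since
\[
\hat{A}^n(t) = [\,I_K ~~ 0\,] \left[\begin{array}{c} \hat{A}^n(t) \\ \tilde{\Phi}^n(t) \end{array}\right],
\]
with $[\,I_K ~~ 0\,]$ the $K \times (K+K^2)$ matrix selecting the arrival block, the two terms acting on the common stacked vector combine into the single coefficient $\big(BH + [\,I_K ~~ 0\,]\big)$, producing exactly the claimed identity. There is essentially no analytic obstacle here: the only point requiring care is the bookkeeping of block dimensions and confirming that $B$ implements the flow summation under the chosen ordering. Once the matrix form of \eqref{eq:Ediff} is in hand, the result is immediate from Lemma~\ref{lem:3}.
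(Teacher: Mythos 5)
Your proposal is correct and follows essentially the same route as the paper: cast \eqref{eq:Ediff} in vector form as $\hat{E}^n(t) = \hat{A}^n(t) + B\hat{D}^n(t)$, substitute the representation of $\hat{D}^n(t)$ from Lemma~\ref{lem:3} (the $\hat{S}^n$ block having zero coefficient), and absorb $\hat{A}^n(t)$ via the selector $[\,I_K ~~ 0\,]$ into the combined coefficient $BH + [\,I_K ~~ 0\,]$. Your explicit verification that $B = \mathbf{1}'\otimes I$ implements the flow aggregation under the lexicographic ordering is a detail the paper leaves implicit, but it does not change the argument.
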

\proof{}
We use \eqref{eq:Ediff} and the previous lemma:
\begin{eqnarray*}
\hl{\widehat{A}}^n(t) &=& B \,\widehat{D}^n(t) + \hl{\widehat{E}}^n(t) \\
&=& B\, H 
\left[
\begin{array}{c}
\hl{\widehat{E}}^n(t) \\
\widetilde{\Phi}^n(t)  
\end{array}
\right]
- 
B\,  P_c(I-P\tr)^{-1} \widehat{Q}^n(t)
+
[ I_K ~~ 0 ] 
\left[
\begin{array}{c}
\hl{\widehat{E}}^n(t) \\
\widetilde{\Phi}^n(t)  
\end{array}
\right]\\
&=& \Big( B\, H + [I_K ~~ 0 ] \Big) 
\left[
\begin{array}{c}
\hl{\widehat{E}}^n(t) \\
\widetilde{\Phi}^n(t) 
\end{array}
\right]
- B\, P_c(I-P\tr)^{-1} \widehat{Q}^n(t).
\end{eqnarray*}
Since $B\, P_c = P\tr$ and $P\tr\, (I - P\tr)^{-1} = (I - P\tr)^{-1} - I$,
\begin{align*}
\Big( B\, H + [I_K ~~ 0 ] \Big) &= \begin{bmatrix} B\, P_c(I - P\tr)^{-1} + I & \left(I + B\, P_c (I - P\tr)^{-1}\right)\, B\end{bmatrix} = G. 
\end{align*}
\endproof
\noindent
We can now establish the diffusion limit in our main theorem.
\medskip

%%%%%%%%%%%%%%%%%%%%%%%%%%%%%%%%%%%%%%%%%%
%%%%%%%%%%%%%%%%%%%%%%%%%%%%%%%%%%%%%%%%%%
%%%%%%%%%%%%%%%%%%%%%%%%%%%%%%%%%%%%%%%%%%
%%%%%%%%%%%%%%%%%%%%%%%%%%%%%%%%%%%%%%%%%%
%%%%%%%%%%%%%%%%%%%% Proof %%%%%%%%%%%%%
\noindent
{\em Proof of Theorem~\ref{thm:main} (i):}
%First observe the weak convergence 
%\begin{equation}
%\label{eq:weakQto0}
%\widehat{Q}^n \Rightarrow 0,
%\qquad
%\text{as}
%\qquad n \to \infty.
%\end{equation}
%To see this, using \eqref{eq:barQis0},
%\[
%\widehat{Q}^n(t) = \frac{Q(nt)}{\sqrt{n}}.
%\]
%Now using \eqref{eq:momentCondMeynDai} for any fixed $t$ and any class $k$, we have 
%\[
% \lim_{n \to \infty} \mathbb{E}_x \left[ \Big(\frac{Q_k(n t)}{\sqrt{n}} \Big)^2 \right]  = 0,
%\]
%for any initial state $x$ of the associated Markov process of the network (and policy). Now the $L_2$ convergence (in $n$) of $\widehat{Q}_k^n(t)$ to $0$ for any fixed $t$ and all classes $k$, implies the weak convergence (at the process level) \eqref{eq:weakQto0}.
%
%QQQQ - Justify (give ref to theorem) of the above: E.g. theorem: If a sequence of stochastic proceses $A^n$ have that $A^n(t)$ converge to $A(t)$ in $L_2$ for any $t$, then we have that $A^n \Rightarrow A$ (on the Skohrohod topology) - Whitt book ??.

\hl{
We first establish the process level convergence $\widehat{Q}^n \Rightarrow 0$ as $n \to \infty$. We rely on Lemma~3.19 in \cite{JacodShiryaev2003} stating that if two c\`{a}dl\`{a}g processes share the same finite dimensional distributions then their probability laws are identical. The $0$ process is clearly c\`{a}dl\`{a}g. We now show that the limiting process of $\widehat{Q}^n$ is also c\`{a}dl\`{a}g and converges on finite dimensional distributions to the tuple of zero vectors.  
}

\hl{
Using the union bound and the Markov inequality, for any $\tilde{\varepsilon} > 0$
\begin{align}
\label{eqn:Markov_inq}
\Prob\left(|\widehat{Q}^{n}(t)| > \tilde{\varepsilon} \right)  \le 
\sum_{k=1}^K \Prob\left(\frac{Q_{k}(nt)}{\sqrt{n}} > \frac{\tilde{\varepsilon}}{K} \right) \leq 
\frac{K^{2 + \varepsilon}}{{n^{1 + \varepsilon/2} \, {\tilde{\varepsilon}^{2+\varepsilon} }}}\sum_{k=1}^K  \Expect\left[Q_{k}(nt)^{2 + \varepsilon} \right].%  \longrightarrow 0, \quad \text{as}\, \, n \to \infty.
\end{align}
Now using the finite moment convergence result \eqref{eq:momentCondMeynDai} and the Borel-Cantelli Lemma, since the series summing the probabilities of the left hand side of \eqref{eqn:Markov_inq} converges, 
%\[
%\sum_{n=1}^\infty \Prob\left(|\widehat{Q}^{n}(t)| > \tilde{\varepsilon} \right)  < \infty,
%\]
%we have that 
it holds that $|\widehat{Q}^{n}(t)| \to 0$ as $n \to \infty$ almost surely. Hence for class $k$ and every fixed $t$
%\begin{equation}
%\label{eq:la-cad-la}
$\lim_{n \to \infty} \widehat{Q}^n_k(t) = 0$,
%\end{equation}
almost surely. And thus the limiting processes $\lim_{n \to \infty} \widehat{Q}^n_k(\cdot)$ %in the left hand side of \eqref{eq:la-cad-la} 
are c\`{a}dl\`{a}g and hence the vector valued process $\lim_{n \to \infty} \widehat{Q}^n(\cdot)$ is c\`{a}dl\`{a}g. 
}

\hl{
It remains to show that the finite dimensional distributions of $\lim_{n \to \infty} \widehat{Q}^n(\cdot)$ at time points $t_1,\ldots, t_\ell$ converge to the tuple of zero vectors. From \eqref{eqn:Markov_inq} and \eqref{eq:momentCondMeynDai} we have that  $\Prob\left(|\widehat{Q}^{n}(t)| > \tilde{\varepsilon} \right) \to 0$ as $n \to \infty$. It thus also holds that,
\[
\lim_{n \to \infty}\Prob\left( |\widehat{Q}^{n}(t_1)| > \tilde{\varepsilon}, \dots,  |\widehat{Q}^{n}(t_\ell)| > \tilde{\varepsilon}  \right) = 0,
\]
and this convergence in probability implies convergence in distribution at time points $t_1,\ldots,t_\ell$.
}

Further, assumptions (A1) and (A2) imply that for each class $k$, there are FCLTs for $\hl{\widehat{E}}^n_k$ with diffusion coefficients as described in \eqref{eq:scvFCLTarrivals}. Assumption \hl{(A3)} together with applications of the continuous mapping theorem and \eqref{eq:barTdoesTheJob} imply FCLTs where for each class $k$, $\widetilde{\Phi}^n_{k,.}(t)$ converges weakly to K-dimensional Brownian motion with covariance matrix $\mu_k \frac{\lambda_k}{\mu_k}\Gamma_k$. 

By assumption of mutual independence of primitive processes,  as stated in (A1) and \hl{(A3)}, the limiting covariance matrix of
\[
\left[
\begin{array}{c}
\hl{\widehat{E}}^n(t) \\
\widetilde{\Phi}^n(t)
\end{array}
\right]
\]
is $\Sigma^{(P)}$.  The result then follows from the representation in Lemmas \ref{lem:3} and \ref{lem:4} and the weak convergence of $\widehat{Q}^n$ to $0$.
\qed

We note that Lemma~\ref{lem:2} can also yield diffusion limits for rate allocations. This appears as (7.89), pp.189 in \cite{bookChenYao2001}. In fact, there the authors handle a much wider case in which some queues may be critical and/or overloaded. This is originally from \cite{chen1991stochastic} (6.14), pg 1498.  As stated in the introduction the diffusion limits for $D$ and $\hl{A}$ did not appear in \cite{chen1991stochastic} and subsequent literature. It is insightful to know that we may also obtain joint diffusion limits for $T$ and $D$ or $\hl{A}$, yet we do not pursue this here. Further, handling the case of overloaded queues does also not pose any additional technical difficulty.  The case of critical queues is in general an open question. It was handled in \cite{al2010asymptotic} for the single station queue.

%%%%%%%%%%%%%%%%%%%%%%%%%%%%%%%%%%%%%%%%%%%%%%%%%%
%%%%%%%%%%%%%%%%%%%%%%%%%%%%%%%%%%%%%%%%%%%%%%%%%%
%%%%%%%%%%%%%%%%%%%%%%%%%%%%%%%%%%%%%%%%%%%%%%%%%%
%%%%%%%%%%%%%%%%%%%%%%%%%%%%%%%%%%%%%%%%%%%%%%%%%%
%%%%%%%%%%%%%%%%%%%%%%%%%%%%%%%%%%%%%%%%%%%%%%%%%%
%%%%%%%%%%%%%%%%%%%%%%%%%%%%%%%%%%%%%%%%%%%%%%%%%%
%%%%%%%%%%%%%%%%%%%%%%%%%%%%%%%%%%%%%%%%%%%%%%%%%%
%%%%%%%%%%%%%%%%%%%%%%%%%%%%%%%%%%%%%%%%%%%%%%%%%%
%%%%%%%%%%%%%%%%%%%%%%%%%%%%%%%%%%%%%%%%%%%%%%%%%%
\section{The Zero Service Time View}
\label{sec:zerTime}

In this section we refer to the queues as {\em nodes} to make it clear that there is actually no queueing taking place. For the $\ell$'th customer arriving exogenously first to node $k$,  denote $N_{j|k}(\ell)$ as the number of times that the customer  visits node $j$, and denote $N_{i,j|k}(\ell)$ as the number of times that the customer traverses on the flow $i \to j$. Thus, $N_{j|k}(\ell)=\sum_{i=1}^K N_{i,j|k}(\ell)$. Define now,
\[
\breve{D}_{i,j}(t) := \sum_{k=1}^K \sum_{\ell = 1}^{\hl{E}_k(t)} N_{i,j|k}(\ell),
\quad
\mbox{and}
\quad
\hl{\breve{A}}_{k}(t) := \hl{E}_k(t) + \sum_{i=1}^K \breve{D}_{i,k}(t)
%= \hl{E}_k(t) + \sum_{i=1}^K \sum_{k'=1}^K \sum_{\ell = 1}^{\hl{E}_k(t)} N_{i,k|k'}(\ell).
   = \hl{E}_k(t) + \sum_{k'=1}^K \sum_{\ell = 1}^{\hl{E}_k(t)} N_{k|k'}(\ell).
\]
The process $\breve{D}_{i,j}(t)$ is a count of the number of items passing from node $i$ to node $j$ up to time $t$ as if service times are $0$. In particular, the $\ell$'th customer who arrives at node $k$ by time $t$ ($\ell=1, \ldots, \hl{E}_k(t)$) makes an ``instantaneous tour'' through the nodes, passing $N_{i,j|k}(\ell)$ times on the flow $i \rightarrow j$. Similarly, $\hl{\breve{A}}_k(t)$ is the count of the number of jobs arriving to queue $k$ either exogenously or passing through the network assuming that service times are $0$.

By considering both $D(\cdot)$ and $\breve{D}(\cdot)$ on the same probability space, we have that $a.s.$,
\[
D_{i,j}(t) \le \breve{D}_{i,j}(t).
\]
Denote now,
\[
\breve{N}_{i,j}(t) := \breve{D}_{i,j}(t) - D_{i,j}(t).
\]
This is the number of future passes on $i \to j$ by customers that are currently in the system (where service times are generally non-zero) at time $t$. It is obvious from the Markovian nature of the routing that,
\begin{equation}
\label{eq:Nbreve_sums}
\breve{N}_{i,j}(t) \stackrel{d}{=} \sum_{k=1}^K \sum_{\ell = 1}^{Q_k(t)} N_{i,j|k}(\ell),
\end{equation}
where the equality \hl{$\stackrel{d}{=}$} is in distribution and for given $k$,
\[
\big\{\big(N_{i,j|k}(\ell),~i,j \in \{1,\ldots,K\}, i\neq j \big),~\ell=1,2,\ldots \big\},
\]
is an i.i.d. sequence (of $K^2$ dimensional random vectors) whose distribution is induced by a discrete time Markov chain on state space $\{0,1,\ldots,K\}$ with transition matrix,
\[
\widetilde{P} =
\begin{bmatrix}
1 & \mathbf{0}\tr \\
\mathbf{1} - P \mathbf{1} & P 
\end{bmatrix}.
\]
To construct $N_{i,j|k}(\ell)$, denote by $\{X^k_n\}$  a sequence of states generated by the above Markov chain with ${\hl{\Prob}}(X_0=k)=1$ for $k \in \{1,\ldots,K\}$. Then for $i\neq j$, $N_{i,j|k}(\ell)$ \hl{has the same distribution as the random variable}
\[
N_{i,j|k} ~ := ~\sum_{n=1}^\infty \hl{\ind}\{X^k_{n-1}=i,X^k_n=j \},
\]
\hl{with $\ind$ denoting an indicator function. Similarly $N_{j|k}(\ell)$ is distributed as}
\[
N_{j|k} ~ := \sum_{n=0}^\infty \hl{\ind} \{ X^k_n=j   \}.
\]
Since the queueing network is open ($P$ is sub-stochastic), the only recurrent class in this Markov chain is $\{0\}$ and thus the random variables $N_{i,j|k}$ are proper. It is also a standard exercise to show that they have finite mean and variance. 
%For each $i$, $j$ and $k$ let $N_{i,j|k}(\ell)$, $\ell=1,2,\ldots$ be an i.i.d. sequence of these random variables, and assume the sequences are independent.

Denote now,
\[
\breve{\sigma}_{i,j} := \lim_{t \to \infty} \frac{\Cov \Big(\hl{\breve{A}}_i(t),\, \hl{\breve{A}}_j(t) \Big)}{t},
\quad
\mbox{and}
\quad
\breve{\sigma}_{i_1 \to j_1,i_2 \to j_2} := \lim_{t \to \infty} \frac{\Cov \Big(\breve{D}_{i_1,j_1}(t),\, \breve{D}_{i_2,j_2}(t) \Big)}{t}.
\]
As we show now,
%, under property (B3), 
these variability parameters (of the zero-service time flows), are the same as the variability parameters of the system with queueing:
\begin{proposition}
\label{prop:NoWaitVar}
If \eqref{eq:momentCondMeynDai} holds, then,
\begin{equation}
\label{eq:sigBrevEqual}
\qquad
\breve{\sigma}_{\hl{i,j}}^2  = {\sigma}_{\hl{i,j}}^2,
\quad
\mbox{and}
\quad
\breve{\sigma}_{i_1 \to j_1,i_2 \to j_2} ={\sigma}_{i_1 \to j_1,i_2 \to j_2}.
\end{equation}
\end{proposition}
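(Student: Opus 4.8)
The plan is to write, on the common probability space, $\breve{D}_{i,j}(t) = D_{i,j}(t) + \breve{N}_{i,j}(t)$ and $\breve{E}_k(t) - E_k(t) = \sum_{i=1}^K \breve{N}_{i,k}(t)$, and then to show that the remainder processes $\breve{N}$ contribute nothing to the normalized covariances in the limit. By bilinearity of covariance, the difference $\Cov\big(\breve{D}_{i_1,j_1}(t),\breve{D}_{i_2,j_2}(t)\big) - \Cov\big(D_{i_1,j_1}(t),D_{i_2,j_2}(t)\big)$ is a sum of three terms, each of the form $\Cov(D,\breve{N})$, $\Cov(\breve{N},D)$ or $\Cov(\breve{N},\breve{N})$, and by Cauchy--Schwarz each is bounded in absolute value by the geometric mean of the two marginal variances. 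Hence, once we show (a) $\Var\big(\breve{N}_{i,j}(t)\big)=o(t)$ and (b) $\Var\big(D_{i,j}(t)\big)=O(t)$ and $\Var\big(\breve{D}_{i,j}(t)\big)=O(t)$, every cross term is $o(t)$; dividing by $t$ and letting $t\to\infty$ gives the claimed equality for the flow parameters, and the same argument applied to $\breve{E}_k-E_k=\sum_i\breve{N}_{i,k}$ handles the $\sigma_{i,j}$. Since the zero-service-time processes are explicit finite sums of i.i.d.\ contributions driven by the arrivals, the limits $\breve{\sigma}$ exist and are finite by an elementary renewal--reward variance computation, so the vanishing of the difference transfers existence to the $\sigma$ parameters and forces equality.

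The crux is step (a). I would use the distributional identity \eqref{eq:Nbreve_sums}, together with the fact that the i.i.d.\ tour vectors $N_{\cdot|k}(\ell)$ are independent across $\ell$ and across $k$ and independent of $\mathbf{Q}(t)$, and condition on $\mathbf{Q}(t)$ via the law of total variance:
\[
\Var\big(\breve{N}_{i,j}(t)\big) = \Expect\big[\Var\big(\breve{N}_{i,j}(t)\mid \mathbf{Q}(t)\big)\big] + \Var\big(\Expect[\breve{N}_{i,j}(t)\mid \mathbf{Q}(t)]\big).
\]
Conditionally, $\breve{N}_{i,j}(t)$ decouples into independent blocks, one per node $k$, so its conditional mean is $\sum_k m_k(i,j)\,Q_k(t)$ and its conditional variance is $\sum_k \Var(N_{i,j|k})\,Q_k(t)$, both using that $N_{i,j|k}$ has finite mean $m_k(i,j)$ and finite variance. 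The first term then equals $\sum_k \Var(N_{i,j|k})\,\Expect[Q_k(t)]$, which is $o(t)$ because $\Expect[Q_k(t)]\le \sqrt{\Expect[Q_k(t)^2]}=o(\sqrt t)$ by Jensen and (A3). For the second term, Cauchy--Schwarz on the finite sum over $k$ gives $\big(\sum_k m_k(i,j)\,Q_k(t)\big)^2 \le \big(\sum_k m_k(i,j)^2\big)\sum_k Q_k(t)^2$, so that $\Var\big(\sum_k m_k(i,j)Q_k(t)\big) \le C\sum_k \Expect[Q_k(t)^2] = o(t)$, again by (A3). This yields $\Var\big(\breve{N}_{i,j}(t)\big)=o(t)$.

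For step (b), the bound $\Var\big(\breve{D}_{i,j}(t)\big)=O(t)$ follows from the identical conditioning argument applied to $\breve{D}_{i,j}(t)=\sum_k\sum_{\ell=1}^{A_k(t)} N_{i,j|k}(\ell)$, now conditioning on $A(t)$: the conditional-mean part is $\Var\big(\sum_k m_k(i,j)A_k(t)\big)=O(t)$ because the arrival UI assumption \eqref{eq:AUIass} forces $\Var(A_k(t))=O(t)$, while the conditional-variance part is $\sum_k \Var(N_{i,j|k})\,\Expect[A_k(t)]=O(t)$ since $\Expect[A_k(t)]\sim \alpha_k t$. The bound $\Var\big(D_{i,j}(t)\big)=O(t)$ then follows from the triangle inequality for the $L^2$ seminorm,
\[
\sqrt{\Var\big(D_{i,j}(t)\big)} = \sqrt{\Var\big(\breve{D}_{i,j}(t)-\breve{N}_{i,j}(t)\big)} \le \sqrt{\Var\big(\breve{D}_{i,j}(t)\big)} + \sqrt{\Var\big(\breve{N}_{i,j}(t)\big)} = O(\sqrt t).
\]

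The main obstacle I anticipate is the rigorous justification, underlying step (a), that conditionally on $\mathbf{Q}(t)$ the future-pass counts of the customers currently in the system decouple into independent blocks with the stated i.i.d.\ law, i.e.\ that the fresh routing of in-system customers is independent of the current queue contents and of the past. This is precisely the content of the distributional identity \eqref{eq:Nbreve_sums}, which is granted by the memoryless (Markovian) nature of Bernoulli routing; once that identity is in hand, the remaining estimates are routine second-moment bounds driven entirely by assumption (A3).
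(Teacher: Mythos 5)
Your proposal is correct and takes essentially the same route as the paper: the identical decomposition $\breve{D}_{i,j}(t)=D_{i,j}(t)+\breve{N}_{i,j}(t)$, the same bilinearity-plus-Cauchy--Schwarz bound on the covariance difference, and the same use of (A3) to make the cross terms vanish after dividing by $t$. The only difference is that you explicitly supply the second-moment estimates the paper merely asserts, namely the law-of-total-variance argument giving $\Var\big(\breve{N}_{i,j}(t)\big)=o(t)$ and the $O(t)$ bound on $\Var\big(D_{i,j}(t)\big)$ via the arrival UI and the $L^2$ triangle inequality, which is a faithful (indeed slightly more careful) filling-in of the paper's one-line justification.
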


\proof{}
We present the proof for the asymptotic variability of $D$, the case of $\hl{A}$ is similar and is omitted. We have,
\begin{eqnarray}
\label{eq:CovBreak}
&&\left|\Cov \Big(\breve{D}_{i_1,j_1}(t), \breve{D}_{i_2,j_2}(t) \Big)  -
\Cov \Big({D}_{i_1,j_1}(t), {D}_{i_2,j_2}(t) \Big) \right| \\  
\nonumber 
&&
   \leq \ 
    \left|\Cov \Big(D_{i_1,j_1}(t), \breve{N}_{i_2,j_2}(t) \Big)\right|
+ \left|\Cov \Big(D_{i_2,j_2}(t), \breve{N}_{i_1,j_1}(t)  \Big)\right| 
+ \left|\Cov \Big( \breve{N}_{i_1,j_1}(t), \breve{N}_{i_2,j_2}(t) \Big)\right|    \\ \nonumber
&&\leq \ 
   \sqrt{\Var \Big(D_{i_1,j_1}(t)\Big) \Var \Big( \breve{N}_{i_2,j_2}(t) \Big)}
+ \sqrt{\Var \Big(D_{i_2,j_2}(t)\Big) \Var \Big( \breve{N}_{i_1,j_1}(t)  \Big)}\\  \nonumber
&&\quad +\sqrt{\Var \Big( \breve{N}_{i_1,j_1}(t)\Big) \Var \Big( \breve{N}_{i_2,j_2}(t) \Big)}.
\end{eqnarray}
For any $(i,j)$ we have that both $\Var (D_{i,j}(t))/t$ and $\Var (\breve{N}_{i,j}(t))$ are bounded from above uniformly in $t$; for the latter this is a consequence of \eqref{eq:momentCondMeynDai}.
%$\Var (Q_k(t))$ being bounded for all $k$ by a constant due to 
%property~(B3).
%  {\bf [QQQQQQQ Too strong, take $\epsilon =0$ or even sublinear growth of $\Var (Q_k(t))$]}.
Dividing \eqref{eq:CovBreak} by $t$ and taking $t \to \infty$ we get the result.
 
\endproof
Note: a version of the above result also exists for the mean rates, $\lambda$. In this case all that is required is finiteness of the first moments of the queues. 

We now express the components of $\breve{\sigma}$ in terms of $\Expect[ N_{i,j|k}]$ and $\Cov ( N_{i_1,j_1|k}, N_{i_2,j_2|k})$.
\begin{proposition}
\label{prop:sigBrev}
\begin{align*}
\breve{\sigma}_{i_1 \to j_1,i_2 \to j_2}  
&=
 \sum_{k=1}^K \alpha_k \Cov ( N_{i_1,j_1|k}, N_{i_2,j_2|k})+ \sum_{k=1}^K  v_k^2 \Expect[ N_{i_1,j_1|k}]\Expect[ N_{i_2,j_2|k}],\\
\breve{\sigma}_{j_1,j_2}&=
v_{j_1}^2 \Expect[N_{j_2|j_1}] + v_{j_2}^2 \Expect[N_{j_1|j_2}] \\
&+\sum_{k=1}^K \alpha_k \Cov \big( N_{j_1|k}, N_{j_2|k}  \big)\ +\ 
\sum_{k=1}^K v_k^2 \, \Expect[N_{j_1|k}]\,   \Expect[N_{j_2|k}]\\
&=
v_{j_1}^2 \Expect[N_{j_2|j_1}] + v_{j_2}^2 \Expect[N_{j_1|j_2}]\ +\ \sum_{i_1=1}^K \sum_{i_2=1}^K  \breve{\sigma}_{i_1 \to j_1,i_2 \to j_2}.  
\end{align*}
\end{proposition}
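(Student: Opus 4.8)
The plan is to represent each zero-service-time flow as a superposition, over the exogenous source node $k$, of a compound (randomly stopped) sum $\sum_{\ell=1}^{A_k(t)}N_{i,j|k}(\ell)$, whose summands are the i.i.d.\ tour-count vectors and whose number of terms is the exogenous arrival count $A_k(t)$. The backbone is the elementary compound-covariance identity: if $(U_\ell,V_\ell)$ is an i.i.d.\ sequence independent of the integer-valued $M$, then
\[
\Cov\Big(\sum_{\ell=1}^{M}U_\ell,\ \sum_{\ell=1}^{M}V_\ell\Big)=\Expect[M]\,\Cov(U,V)+\Var(M)\,\Expect[U]\,\Expect[V],
\]
which I would obtain by conditioning on $M$ and separating the diagonal ($\ell=m$) from the off-diagonal ($\ell\neq m$) contributions.

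For the flow covariances I would first use that the arrival streams $A_k$ and the tour sequences $\{N_{\cdot|k}(\ell)\}$ are mutually independent across the source index $k$, so the compound sums feeding $\breve{D}_{i_1,j_1}(t)$ and $\breve{D}_{i_2,j_2}(t)$ are independent for distinct sources and only the matching-$k$ terms survive:
\[
\Cov\big(\breve{D}_{i_1,j_1}(t),\breve{D}_{i_2,j_2}(t)\big)=\sum_{k=1}^{K}\Cov\Big(\sum_{\ell=1}^{A_k(t)}N_{i_1,j_1|k}(\ell),\ \sum_{\ell=1}^{A_k(t)}N_{i_2,j_2|k}(\ell)\Big).
\]
Applying the compound-covariance identity with $M=A_k(t)$ and $(U_\ell,V_\ell)=(N_{i_1,j_1|k}(\ell),N_{i_2,j_2|k}(\ell))$, then dividing by $t$ and letting $t\to\infty$, yields the first stated formula, provided the two per-source coefficients converge, namely $\Expect[A_k(t)]/t\to\alpha_k$ and $\Var(A_k(t))/t\to v_k^2$.

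The crux of the argument, and the step I expect to be the main obstacle, is justifying $\Var(A_k(t))/t\to v_k^2$. The FCLT hypothesis only supplies weak convergence of $(A_k(t)-\alpha_k t)/\sqrt{t}$ to a centered normal law with variance $v_k^2$; to promote this to convergence of the second moment I would invoke the uniform-integrability hypothesis \eqref{eq:AUIass}, under which weak convergence together with UI of the squared diffusion scalings gives convergence of the variances. The companion limit $\Expect[A_k(t)]/t\to\alpha_k$ is immediate from the FSLL, and the finiteness of $\Expect[N_{\cdot|k}]$, $\Var(N_{\cdot|k})$ and the cross covariances is already recorded in the text, so all the tour-count coefficients are finite constants. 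Note that assumption (A3) is not needed here, since we are computing the zero-service-time parameters $\breve{\sigma}$ directly rather than relating them to the queueing parameters $\sigma$.

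Finally, for $\breve{E}$ I would expand $\breve{E}_j(t)=A_j(t)+\sum_i\breve{D}_{i,j}(t)$ and split the covariance into the exogenous-arrival piece $\Cov(A_{j_1},A_{j_2})$, the two cross pieces $\Cov\big(A_{j_1},\sum_i\breve{D}_{i,j_2}\big)$ and its mirror, and the flow-flow piece $\sum_{i_1}\sum_{i_2}\Cov(\breve{D}_{i_1,j_1},\breve{D}_{i_2,j_2})$. The flow-flow piece directly delivers the $\sum_{i_1,i_2}\breve{\sigma}_{i_1\to j_1,i_2\to j_2}$ term of the second stated form, while the cross pieces, again retaining only the matching source node, produce the $v_{j}^2\Expect[N_{\cdot|j}]$ terms. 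To pass from the second form to the first I would substitute the transition-counting identity $\sum_i N_{i,j|k}=N_{j|k}-\mathbf{1}\{k=j\}$ (a transition into $j$ is a visit to $j$ other than the initial placement) inside the double sum $\sum_{i_1,i_2}\breve{\sigma}_{i_1\to j_1,i_2\to j_2}$; this rewrites the flow covariances and second moments in terms of the visit counts $N_{\cdot|k}$. The only delicate point is the bookkeeping of the resulting Kronecker-delta corrections, which must be collected against the diagonal arrival contribution on $j_1=j_2$ to reconcile the two forms.
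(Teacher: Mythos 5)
Your treatment of the flow--flow covariances (the first display) is exactly the paper's argument: independence across exogenous source nodes reduces everything to the matching-$k$ terms, and the conditional (compound-sum) covariance identity gives $\Expect[A_k(t)]\,\Cov(N_{i_1,j_1|k},N_{i_2,j_2|k})+\Var(A_k(t))\,\Expect[N_{i_1,j_1|k}]\,\Expect[N_{i_2,j_2|k}]$. Your explicit appeal to \eqref{eq:AUIass} to upgrade the FCLT to $\Var(A_k(t))/t\to v_k^2$ fills in a step the paper performs silently, and it is the right justification; you are also correct that (A3) plays no role in this proposition.

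The problem is in the $\breve{E}$ part, and it is the opposite of what you anticipate. Your two refinements --- keeping the exogenous--exogenous piece $\Cov(A_{j_1}(t),A_{j_2}(t))$, and using $\sum_i N_{i,j|k}=N_{j|k}-\mathbf{1}\{k=j\}$ --- are both correct, but precisely because of them your computation, carried to completion, does \emph{not} reconcile with the stated display. The paper's own expansion \eqref{eq:covEE} silently drops $\Cov(A_{j_1},A_{j_2})$ and substitutes $\sum_i N_{i,j|k}=N_{j|k}$ with no delta correction; reading $N_{j|k}$ as $\sum_i N_{i,j|k}$ (transitions into $j$), the stated formula is exactly what that produces. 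Off the diagonal the dropped term vanishes and your result agrees with the display. On the diagonal, however, $\Cov(A_j,A_j)=\Var(A_j(t))\sim v_j^2 t$ survives, and your bookkeeping yields the stated right-hand side \emph{plus} $v_j^2$ when $j_1=j_2=j$; equivalently, counting visits from $n=0$, the clean outcome of your argument is $\breve{\sigma}_{j_1,j_2}=\sum_k\alpha_k\Cov(N_{j_1|k},N_{j_2|k})+\sum_k v_k^2\,\Expect[N_{j_1|k}]\,\Expect[N_{j_2|k}]$ with no separate cross terms. Your version is the correct one, as the paper's own example confirms: queue $2$ receives no internal flows, so the printed formula returns $\breve{\sigma}_{2,2}=0$, while $\Sigma^{(E)}_{2,2}=v_2^2=2$; likewise the printed formula gives $50/9$ for queue $1$, while $\Sigma^{(E)}_{1,1}=68/9=50/9+v_1^2$. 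So the Kronecker deltas cannot be made to cancel against the diagonal arrival contribution, and you should not try to force them to; finish the calculation as you set it up and report the correction $\delta_{j_1,j_2}v_{j_1}^2$ to the stated display (a discrepancy that propagates to \eqref{eq:mainThmiii-2} in the main theorem), rather than treating the reconciliation as a mere bookkeeping detail.
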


\proof{}
We begin with the asymptotic variability of $\breve{D}$, namely $\breve{\sigma}_{i_1 \to j_1,i_2 \to j_2}$ . For illustration we begin with the variance (even though it is a special case of the covariance calculation that follows). Using the conditional variance rule we get,
\[
\Var \Big( \breve{D}_{i,j}(t) \Big) = \sum_{k=1}^K \Var \Big(  \sum_{\ell = 1}^{\hl{E}_k(t)} N_{i,j|k}(\ell) \Big)
=
\sum_{k=1}^K 
\Big(
\Expect[\hl{E}_k(t)]\, \Var ( N_{i,j|k}) + 
\Var(\hl{E}_k(t)) \, \Expect[ N_{i,j|k}]^2
\Big).
\]
Moving onto the covariance, observe that $N_{i_1,j_1|k}(\ell)$ and $N_{i_2,j_2|k'}(\ell)$ are independent whenever $k \neq k'$, hence,
\[
\Cov \Big(\breve{D}_{i_1,j_1}(t),\breve{D}_{i_2,j_2}(t) \Big)
=
\sum_{k=1}^{K}
\Cov
\Big(
\sum_{\ell = 1}^{\hl{E}_k(t)} N_{i_1,j_1|k}(\ell)
,
\sum_{\ell = 1}^{\hl{E}_k(t)} N_{i_2,j_2|k}(\ell)
\Big)
\]
\[
=
\sum_{k=1}^K
\Big(
\Expect[\hl{E}_k(t)]\, \Cov ( N_{i_1,j_1|k}, N_{i_2,j_2|k}) + 
\Var\big(\hl{E}_k(t)\big) \, \Expect[ N_{i_1,j_1|k}]\Expect[ N_{i_2,j_2|k}]
\Big)
\]
where in the second step we use the conditional covariance rule,
\[
\Cov(X,Y) = \Expect\big[\Cov(X,Y|Z)\big] + \Cov\big(\Expect[X|Z],\Expect[Y|Z]\big).
\]
Dividing by $t$ and taking $t \to \infty$ yields the result.

Moving onto the asymptotic variability of $\hl{\breve{A}}$ (this time treating the variance and the other covariance terms together)  we expand and get:
\begin{eqnarray}
\Cov \big(  \hl{\breve{A}}_{j_1}(t),  \hl{\breve{A}}_{j_2}(t) \big)
&=& \nonumber
\sum_{i_2=1}^K \Cov \big( \hl{E}_{j_1}(t), \breve{D}_{i_2, j_2}(t) \big)+
\sum_{i_1=1}^K \Cov \big( \hl{E}_{j_2}(t), \breve{D}_{i_1, j_1}(t) \big)\\
&&+ \label{eq:covEE}
\sum_{i_1=1}^K \sum_{i_2=1}^K \Cov \big( \breve{D}_{i_1, j_1}(t), \breve{D}_{i_2, j_2}(t) \big)
\end{eqnarray}
To rewrite the first sum on the righthand side we can use
\begin{align*}
%\displaybreak
\Cov \big( \hl{E}_{j_1}(t), \breve{D}_{i_2, j_2}(t) \big)
&=
\sum_{k=1}^K \Cov \big(\hl{E}_{j_1}(t), \sum_{\ell=1}^{\hl{E}_k(t)} N_{i_2,j_2|k}(\ell)  \big)\\
&=
\Expect[\Cov \big(\hl{E}_{j_1}(t), \sum_{\ell=1}^{\hl{E}_{j_1}(t)} N_{i_2,j_2|j_1}(\ell) ~\big|~ \hl{E}_{j_1}(t) \big) \\
&\,\,
\quad+ 
\Cov \big(\Expect[\hl{E}_{j_1}(t) ~|~ \hl{E}_{j_1}(t)],\Expect[\sum_{\ell=1}^{\hl{E}_{j_1}(t)} N_{i_2,j_2|j_1}(\ell) ~\big|~\hl{E}_{j_1}(t)]  \big) \\
&=
\Cov \big( \hl{E}_{j_1}(t),\hl{E}_{j_1}(t) \,\Expect[N_{i_2,j_2|j_1}(\ell)]  \big)\\
&=
\Var \big( \hl{E}_{j_1}(t)  \big) \,\Expect[N_{i_2,j_2|j_1}(\ell)] 
\end{align*}
with a similar expression holding for the second term, while the third term on the right hand side of (\ref{eq:covEE}) can be rewritten using
\begin{align*}
\Cov \big( \breve{D}_{i_1, j_1}(t), \breve{D}_{i_2, j_2}(t) \big)
&=\sum_{k_1=1}^K \sum_{k_2=1}^K  \Cov \big( \sum_{\ell_1=1}^{\hl{E}_{k_1}(t)} N_{i_1,j_1|k_1}(\ell_1),              
                                                                           \sum_{\ell_2=1}^{\hl{E}_{k_2}(t)} N_{i_2,j_2|k_2}(\ell_2),\big)\\     
%\displaybreak                                                                                                                                                                                                            
&=\sum_{k=1}^K  \Expect \left[ \sum_{\ell=1}^{\hl{E}_{k}(t)} \Cov \big(N_{i_1,j_1|k}(\ell),              
                                                                                                          N_{i_2,j_2|k}(\ell) \big) \right]\\
&\quad+   \sum_{k=1}^K  \Cov \big( \hl{E}_{k}(t)~ \Expect[N_{i_1,j_1|k}(\ell)],~ \hl{E}_k(t)~ \Expect[N_{i_2,j_2|k}(\ell)] \big)\\
&=\sum_{k=1}^K \Expect[\hl{E}_k(t)] \Cov \big(N_{i_1,j_1|k}(\ell),~ N_{i_2,j_2|k}(\ell)  \big) \\
&\quad+\hl{\sum_{k=1}^K } \Var \big(\hl{E}_k(t)  \big)~ \Expect[N_{i_1,j_1|k}(\ell)]~  \Expect[N_{i_2,j_2|k}(\ell)].
\end{align*}
where we used the independence of different customers in the absence of queuing.

Substituting in (\ref{eq:covEE}) and using $\sum_{i=1}^K N_{i,j|k}(\ell)=N_{j|k}(\ell)$ we arrive at
\begin{eqnarray*}
\Cov \big(  \hl{\breve{A}}_{j_1}(t),  \hl{\breve{A}}_{j_2}(t) \big)&=& 
\Var \big( \hl{E}_{j_1}(t)  \big)\, \Expect[N_{j_2|j_1}(\ell)] + 
\Var \big( \hl{E}_{j_2}(t)  \big) \, \Expect[N_{j_1|j_2}(\ell)] \\
&&+
\sum_{k=1}^K \Expect[\hl{E}_k(t)] \,\Cov \big( N_{j_1|k}(\ell), N_{j_2|k}(\ell)  \big) + 
\Var \big(\hl{E}_k(t)  \big) \, \Expect[N_{j_1|k}(\ell)]  \Expect[N_{j_2|k}(\ell)].
\end{eqnarray*}
Now dividing by $t$ and letting $t\rightarrow \infty$, the result is immediate. 
\endproof

We now represent $\Expect[ N_{i,j|k}]$ and $\Cov ( N_{i_1,j_1|k}, N_{i_2,j_2|k})$ in terms of the routing matrix $P$. It is an elementary application of ``first step analysis'' to calculate the desired moments (c.f. \cite{karlin1975fcs} and/or \cite{kemeny1960fmc}), yet we have not seen this specific calculation elsewhere, so we spell out the details. Define:
\[
m(i,j) := 
\left[
\begin{array}{c}
\Expect [ N_{i,j|1}] \\
\vdots \\
\Expect [ N_{i,j|K}] \\
\end{array}
\right],
\quad
m(i_1,j_1,i_2,j_2) := 
\left[
\begin{array}{c}
\Expect [ N_{i_1,j_1|1} \, N_{i_2,j_2|1}] \\
\vdots \\
\Expect [N_{i_1,j_1|K} \,N_{i_2,j_2|K}] \\
\end{array}
\right],
\]
\[
c(i_1,j_1,i_2,j_2) := 
\left[
\begin{array}{c}
\Cov \big( N_{i_1,j_1|1},N_{i_2,j_2|1}\big) \\
\vdots \\
\Cov \big(N_{i_1,j_1|K},N_{i_2,j_2|K}\big) \\
\end{array}
\right].
\]
\begin{lemma}
\label{lemma:firstStep}
The definition of $m(i,j)$ in \eqref{eq:MijFirst} agrees with the above, namely
\[
m(i,j) = (I- P)^{-1} e_{i,i} P_{\cdot,j}.
\]

Further, let $i_1 \rightarrow j_1$ and $i_2 \rightarrow j_2$ be distinct flows (i.e., $i_1 \neq i_2$,  or $j_1 \neq j_2$, or both), then
\begin{eqnarray}
\label{eq:mijij}
m(i_1,j_1,i_2,j_2) 
&=&
m(i_1,j_1) m_{j_1}(i_2,j_2) + m(i_2,j_2) m_{j_2}(i_1,j_1),\\
m(i,j,i,j) 
&=&
m(i,j) \big( 1+2 m_{j}(i,j)\big), \nonumber
\end{eqnarray}
and thus,
\begin{eqnarray}
\label{eq:cijij}
c(i_1,j_1,i_2,j_2) &=& m(i_1,j_1) \, m_{j_1}(i_2,j_2) + m(i_2,j_2) \, m_{j_2}(i_1,j_1) - m(i_1,j_1) \bullet m(i_2,j_2),\\
c(i,j,i,j) &=& m(i,j) \big(1+2 m_{j}(i,j)\big) - m(i,j) \bullet m(i,j). \nonumber 
\end{eqnarray}
\end{lemma}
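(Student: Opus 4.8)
The plan is to compute the first two moments of the counts $N_{i,j|k}$ directly from their representation as occupation sums of the transient Markov chain $\{X_n^k\}$ with sub-stochastic transition block $P$, and then read off the covariances. Throughout I rely on the fact (noted just above the lemma) that these counts have finite first and second moments, which legitimizes all the interchanges of summation and expectation below.

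For the mean I would start from $N_{i,j|k} = \sum_{n=1}^\infty \mathbf{1}\{X_{n-1}^k = i, X_n^k = j\}$ and take expectations term by term. Conditioning on $X_{n-1}^k$ gives $\Prob(X_{n-1}^k = i, X_n^k = j) = (P^{n-1})_{k,i}\, p_{i,j}$, since $P$ governs the transient transitions. Summing over $n$ and recognizing the Neumann series $\sum_{n \ge 0} P^n = (I-P)^{-1}$ (convergent because $P$ has spectral radius below one) yields $\Expect[N_{i,j|k}] = ((I-P)^{-1})_{k,i}\, p_{i,j}$. Since $e_{i,i} P_{\cdot,j}$ is the vector with $p_{i,j}$ in coordinate $i$ and zeros elsewhere, the $k$-th coordinate of $(I-P)^{-1} e_{i,i} P_{\cdot,j}$ is exactly $((I-P)^{-1})_{k,i}\, p_{i,j}$, so the probabilistic definition agrees with \eqref{eq:MijFirst}.

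For the product moment $\Expect[N_{i_1,j_1|k}\, N_{i_2,j_2|k}]$ I would expand the product of the two occupation sums into a double sum over time indices $n$ and $m$ and split it according to $n<m$, $n>m$, and $n=m$. When the two flows are distinct the diagonal $n=m$ contributes nothing, because a single transition cannot simultaneously be $i_1\to j_1$ and $i_2\to j_2$. For the $n<m$ block I condition on the history up to time $n$: on the event $\{X_{n-1}^k=i_1,\, X_n^k=j_1\}$ the Markov property restarts the chain at $j_1$, so the conditional expected number of later $i_2\to j_2$ transitions is $m_{j_1}(i_2,j_2)$; summing over $n$ then gives $m_k(i_1,j_1)\, m_{j_1}(i_2,j_2)$. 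The $n>m$ block gives the symmetric term $m_k(i_2,j_2)\, m_{j_2}(i_1,j_1)$, which together establish \eqref{eq:mijij}. For a single flow ($i_1=i_2=i$, $j_1=j_2=j$) the diagonal now contributes $\Expect[N_{i,j|k}] = m_k(i,j)$, while the two off-diagonal blocks each contribute $m_k(i,j)\, m_j(i,j)$, yielding $m(i,j,i,j) = m(i,j)\big(1+2 m_j(i,j)\big)$. Finally \eqref{eq:cijij} follows by subtracting $\Expect[N_{i_1,j_1|k}]\Expect[N_{i_2,j_2|k}] = m_k(i_1,j_1)\, m_k(i_2,j_2)$, which is the $k$-th entry of $m(i_1,j_1)\bullet m(i_2,j_2)$, from the product moment just obtained.

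The matrix bookkeeping and the geometric summation are routine; the one step requiring genuine care is the conditioning argument for the $n<m$ block, where I apply the Markov property at the fixed time $n$ and must justify interchanging the outer sum over $n$ with the inner conditional expectation. This is precisely where finiteness of the second moments of the $N_{i,j|k}$ enters, via a Tonelli/dominated-convergence justification that keeps every sum absolutely convergent and the rearrangement valid.
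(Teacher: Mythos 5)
Your proof is correct, but it takes a genuinely different route from the paper's. The paper obtains the product moments by \emph{first-step analysis}: conditioning on the first transition out of the initial node $k$ yields the linear system $m(i_1,j_1,i_2,j_2) = P\, m(i_1,j_1,i_2,j_2) + e_{i_1,i_1}P_{\cdot,j_1}\, m_{j_1}(i_2,j_2) + e_{i_2,i_2}P_{\cdot,j_2}\, m_{j_2}(i_1,j_1)$, which is then solved by inverting $I-P$; the mean formula is simply cited as the well-known occupation-time identity. You instead expand the product of the two occupation sums into a double sum over transition times, split it into the blocks $n<m$, $n>m$ and $n=m$, and apply the Markov property at the intermediate transition time; the two ordered blocks give exactly the two terms of \eqref{eq:mijij}, and the diagonal block gives the extra $m(i,j)$ (the ``$1+$'' term) in the same-flow case. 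Each approach has its merits. Yours is more self-contained: you derive the mean via the Neumann series rather than quoting it, and because every summand in your double sum is nonnegative, Tonelli alone validates all rearrangements, so finiteness of the second moments emerges as a by-product of the computation rather than being needed up front (the paper's linear-system argument implicitly requires that finiteness in order for the system to determine $m(i_1,j_1,i_2,j_2)$, and the paper establishes it separately just before the lemma). The paper's first-step analysis, in turn, lands directly on the matrix form $(I-P)^{-1}(\cdots)$ appearing in the statement with no time-indexed bookkeeping, and it is the standard template that extends mechanically to recursions for higher moments. One small quibble: you attribute the legitimacy of the interchanges to finiteness of the second moments via a dominated-convergence argument; in fact nonnegativity (Tonelli) already suffices for the interchanges, and finiteness is only needed so that the final subtraction defining the covariance is an identity between finite quantities.
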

\proof{}
It is well-known that $\Expect[N_{i|k}]$ is the $(k,i)$th element of $(I-P)^{-1}$, and clearly $\Expect[N_{i,j|k}]=\Expect[N_{i|k}]\,p_{i,j}$, from which the first statement follows. For $\Expect[N_{i_1,j_1|k}\, N_{i_2,j_2|k}]$ we condition on the first transition from the initial node $k$, as follows (let $i_1 \neq i_2$,  and/or $j_1 \neq j_2$).
\begin{eqnarray}
\nonumber
\Expect[N_{i_1,j_1|k}~N_{i_2,j_2|k}] &=&
\sum_{k'=1, k' \not\in \{j_1,j_2\}}^K p_{k , k'} ~ \Expect[N_{i_1,j_1|k'}N_{i_2,j_2|k'}] + 
\\
\nonumber
&&p_{k,j_1} \Expect [ (\delta_{k,i_1} + N_{i_1,j_1|j_1}) N_{i_2,j_2|j_1}]
+ p_{k,j_2} \Expect [ N_{i_1,j_1|j_2} (\delta_{k,i_2} + N_{i_2,j_2|j_2})] \\
\nonumber
&=&
\sum_{k'=1}^K p_{k , k'} ~ \Expect[N_{i_1,j_1|k'}\, N_{i_2,j_2|k'}]  + \\
\label{eq:NijCovariance}
&& p_{k,j_1} \delta_{k,i_1} \Expect [ N_{i_2,j_2|j_1}]  + p_{k,j_2} \delta_{k,i_2} \Expect [ N_{i_1,j_2|j_2}].
\end{eqnarray}

The equations \eqref{eq:NijCovariance} can be represented as,
\[
m(i_1,j_1,i_2,j_2) = P\,  m(i_1,j_1,i_2,j_2) + e_{i_1,i_1} P_{\cdot,j_1} \, m_{j_1}(i_2,j_2) +
e_{i_2,i_2} P_{\cdot,j_2} \, m_{j_2}(i_1,j_1).
\]
or rearranged to,
\begin{eqnarray*}
\nonumber
m(i_1,j_1,i_2,j_2) &=& (I-P)^{-1} \, \big(
  e_{i_1,i_1} \,P_{\cdot,j_1} \,m_{j_1}(i_2,j_2) +
e_{i_2,i_2}\, P_{\cdot,j_2}\, m_{j_2}(i_1,j_1)
\big),
\end{eqnarray*}
which yields \eqref{eq:mijij}. In a similar way, we can show that 
\begin{eqnarray}
\nonumber
\Expect[N^2_{i,j|k}] &=&
\sum_{k'=1}^K p_{k , k'} ~ \Expect[N^2_{i,j|k'}]  + 
p_{k,j} \delta_{k,i} \big(1+2 \Expect [ N_{i,j|j}] \big),
\end{eqnarray}
which gives 
\begin{eqnarray*}
\nonumber
m(i,j,i,j) &=& (I-P)^{-1} 
  e_{i,i} P_{\cdot,j} \big(1+2 m_{j}(i,j)
\big). 
\end{eqnarray*}
\endproof

{\em Proof of Theorem~\ref{thm:main} (iii):}  Proposition~\ref{prop:NoWaitVar} indicates that under property \eqref{eq:momentCondMeynDai}, the variability parameters are the same as those of the zero service time processes, and \eqref{eq:momentCondMeynDai} follows from the theorem assumptions. Now the combination of Proposition~\ref{prop:sigBrev} and Lemma~\ref{lemma:firstStep} yield the result.
\qed

%%%%%%%%%%%%%%%%%%%%%%%%%%%%%%%%%%%%%%%%%%%%%%%%%%
%%%%%%%%%%%%%%%%%%%%%%%%%%%%%%%%%%%%%%%%%%%%%%%%%%
%%%%%%%%%%%%%%%%%%%%%%%%%%%%%%%%%%%%%%%%%%%%%%%%%%
%%%%%%%%%%%%%%%%%%%%%%%%%%%%%%%%%%%%%%%%%%%%%%%%%%
%%%%%%%%%%%%%%%%%%%%%%%%%%%%%%%%%%%%%%%%%%%%%%%%%%
%%%%%%%%%%%%%%%%%%%%%%%%%%%%%%%%%%%%%%%%%%%%%%%%%%
%%%%%%%%%%%%%%%%%%%%%%%%%%%%%%%%%%%%%%%%%%%%%%%%%%
%%%%%%%%%%%%%%%%%%%%%%%%%%%%%%%%%%%%%%%%%%%%%%%%%%
%%%%%%%%%%%%%%%%%%%%%%%%%%%%%%%%%%%%%%%%%%%%%%%%%%
\section{Asymptotic Variance and Uniform Integrability}
\label{sec:asympVarAndUI}

As stated at onset our original goal is to obtain expressions for ${\sigma}_{k_1,k_2}$ and ${\sigma}_{i_1 \to j_1, i_2 \to j_2}$. As we state in Theorem~\ref{thm:main} (ii) these can now be read off from the matrices $\Sigma^{(\hl{A})}$ and $\Sigma^{(D)}$ respectively. The presentation in this section is for the ${\sigma}_{i_1 \to j_1, i_2 \to j_2}$ terms; analogous results for the terms associated with $\hl{A}(\cdot)$ can be proved in the exact same manner.  

Proving Theorem~\ref{thm:main} (ii) requires establishing suitable uniform integrability (UI) conditions for the following families:
\begin{eqnarray*}
{\cal D}_{i,j}^{(1)} & = & \left\{ \frac{ D_{i,j}(t) - \lambda_{i,j}t}{\sqrt{t}},\,\, t\ge t_0 \right\} ,\\
{\cal D}_{i,j}^{(2)} &=&
\left\{  \frac{ \big( D_{i,j}(t) - \lambda_{i,j}t \big)^2}{t},\,\, t\ge t_0 \right\}, \\
{\cal D}_{(i_1,j_1),(i_2,j_2)} 
&=&\left\{\frac{ \big( D_{i_1,j_1}(t) - \lambda_{i_1,j_1}t \big) \big( D_{i_2,j_2}(t) - \lambda_{i_2,j_2}t \big)}{t},\,\, t\ge t_0 \right\},
\end{eqnarray*}
where $t_0>0$ is arbitrary. Note that while each of the families ${\cal D}_{i,j}^{(2)}$ is a special case of ${\cal D}_{(i_1,j_1),(i_2,j_2)}$, we treat it separately in this section for clarity.  See for example \cite{gut2005probability} for properties of UI sequences and families, and relations to weak convergence. 

The following proposition relates the diffusion parameters to the asymptotic variance parameters.
\begin{proposition}
\label{prop:asympVarDiffEqual}
If ${\cal D}_{i,j}^{(1)}$ and ${\cal D}_{i,j}^{(2)}$ are UI then,
\[
{\sigma}_{i \to j}^2 = \Sigma^{(D)}_{(i-1)K+j,\,(i-1)K+j}~.
\]
If ${\cal D}_{i,j}^{(1)}$ and ${\cal D}_{(i_1,j_1),(i_2,j_2)} $ are UI then,
\[
 \quad {\sigma}_{i_1\to j_1,i_2 \to j_2} = \Sigma^{(D)}_{(i_1-1)K+j_1,(i_2-1)K + j_2}~.
\]
\end{proposition}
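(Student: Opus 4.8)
The plan is to combine the functional central limit theorem from part~(i) with the assumed uniform integrability, exploiting the standard fact that weak convergence together with uniform integrability upgrades to convergence of the relevant moments (see \cite{gut2005probability}). Concretely, evaluating the limit $\hat{D}^n \Rightarrow \hat{D}$ from part~(i) at time $1$ and reading the scaling parameter as continuous, I first record the one-dimensional convergence
\[
\frac{D_{i,j}(t) - \nu_{i,j}t}{\sqrt{t}} \;\Rightarrow\; \hat{D}_{i,j}(1), \qquad t \to \infty,
\]
where $\hat{D}_{i,j}(1)$ is a centered Gaussian with $\Var(\hat{D}_{i,j}(1)) = \Sigma^{(D)}_{(i-1)K+j,(i-1)K+j}$, together with its joint two-dimensional analogue for the pair $(i_1,j_1),(i_2,j_2)$, whose limit satisfies $\Expect[\hat{D}_{i_1,j_1}(1)\hat{D}_{i_2,j_2}(1)] = \Sigma^{(D)}_{(i_1-1)K+j_1,(i_2-1)K+j_2}$, since $\hat{D}$ is a driftless Brownian motion with covariance $\Sigma^{(D)}$.

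Next I would split the target quantities into moments centered at the fluid value $\nu_{i,j}t$ rather than the true mean, which is legitimate because the variance is shift-invariant. Writing $Y_t = D_{i,j}(t) - \nu_{i,j}t$ and, for $m=1,2$, $Y^{(m)}_t = D_{i_m,j_m}(t) - \nu_{i_m,j_m}t$, this gives
\[
\frac{\Var(D_{i,j}(t))}{t} = \Expect\!\left[\left(\frac{Y_t}{\sqrt{t}}\right)^2\right] - \left(\Expect\!\left[\frac{Y_t}{\sqrt{t}}\right]\right)^2,
\]
and analogously
\[
\frac{\Cov(D_{i_1,j_1}(t),D_{i_2,j_2}(t))}{t} = \Expect\!\left[\frac{Y^{(1)}_t Y^{(2)}_t}{t}\right] - \Expect\!\left[\frac{Y^{(1)}_t}{\sqrt{t}}\right]\Expect\!\left[\frac{Y^{(2)}_t}{\sqrt{t}}\right].
\]
Each term on the right is the first moment of a member of one of the families ${\cal D}^{(1)}_{i,j}$, ${\cal D}^{(2)}_{i,j}$ or ${\cal D}_{(i_1,j_1),(i_2,j_2)}$. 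By the continuous mapping theorem the integrand $Y^{(1)}_t Y^{(2)}_t/t$ converges weakly to $\hat{D}_{i_1,j_1}(1)\hat{D}_{i_2,j_2}(1)$, and likewise $(Y_t/\sqrt{t})^2 \Rightarrow \hat{D}_{i,j}(1)^2$.

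Finally I would apply the uniform-integrability-plus-weak-convergence criterion term by term. Uniform integrability of ${\cal D}^{(2)}_{i,j}$ (respectively ${\cal D}_{(i_1,j_1),(i_2,j_2)}$) lets me pass to the limit in the leading term, producing $\Expect[\hat{D}_{i,j}(1)^2] = \Sigma^{(D)}_{(i-1)K+j,(i-1)K+j}$ (respectively $\Sigma^{(D)}_{(i_1-1)K+j_1,(i_2-1)K+j_2}$). Uniform integrability of ${\cal D}^{(1)}_{i,j}$ forces each first-moment correction $\Expect[Y_t/\sqrt{t}]$ to converge to the mean of the Gaussian limit, namely $0$, so the subtracted product terms vanish; collecting the two contributions yields exactly the claimed identities. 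I expect the only genuinely delicate point to be the first step: part~(i) states weak convergence of $\hat{D}^n$ along the integer sequence $n$, whereas the asymptotic-variance limits are taken over a continuous parameter $t$, so I must justify the continuous-$t$ marginal convergence (for instance by invoking the Donsker-type FCLT with a continuous scaling parameter, or by interpolating between consecutive integer scales and using $J_1$-continuity). Once that marginal convergence is secured the argument is pure bookkeeping, since the substantive analytic work — establishing that the three families are in fact UI — is taken as hypothesis here and carried out separately using assumption~(A3).
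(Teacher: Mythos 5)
Your proposal is correct and follows essentially the same route as the paper: marginal convergence at time $1$ via projection, the continuous mapping theorem for squares and products, UI to upgrade weak convergence to convergence of first and second moments, and then assembling the variance/covariance as a second moment minus a product of (vanishing) first moments centered at the fluid values. The continuous-$t$ versus integer-$n$ point you flag is glossed over in the paper's own proof (it simply invokes the projection map at $t=1$ with continuous scaling), so your closing remark would, if anything, make the argument slightly more careful than the original.
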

\proof{}
By the projection map at time $t=1$ (c.f. \cite{bookWhitt2001}) we have the convergence in distribution:
\[
\frac{D_{i,j}(t) - \lambda_{i,j}t}{\sqrt{t}} \Rightarrow \widehat{D}_{i,j}(1).
\]
Further, using the continuous mapping theorem we obtain,
\[
\frac{\big( D_{i,j}(t) - \lambda_{i,j}t \big)^2}{t} \Rightarrow \big(\widehat{D}_{i,j}(1)\big)^2.
\]
Similarly we have the convergence in distribution on ${\mathbb R}^2$:
\[
\Big[
\frac{D_{i_1,j_1}(t) - \lambda_{i_1,j_1}t}{\sqrt{t}}
,\,\,
\frac{D_{i_2,j_2}(t) - \lambda_{i_2,j_2}t}{\sqrt{t}}
\Big]
 \Rightarrow \Big[\widehat{D}_{i_1,j_1}(1),\,\, \widehat{D}_{i_2,j_2}(1) \Big],
\]
and thus using the continuous mapping theorem,
\[
\frac{D_{i_1,j_1}(t) - \lambda_{i_1,j_1}t}{\sqrt{t}}
\,\cdot \,
\frac{D_{i_2,j_2}(t) - \lambda_{i_2,j_2}t}{\sqrt{t}}
\Rightarrow 
\widehat{D}_{i_1,j_1}(1)\, \cdot \, \widehat{D}_{i_2,j_2}(1).
\]
Under the UI conditions established below the above weak convergences in distribution imply that,
\begin{eqnarray*}
%\label{eq:convE1}
\lim_{t \to \infty} \Expect\Big[\frac{ D_{i,j}(t) - \lambda_{i,j}t}{\sqrt{t}}\Big] &=& \Expect\big[\widehat{D}_{i,j}(1)\big],\\
\lim_{t \to \infty} \Expect\Big[\frac{ (D_{i,j}(t) - \lambda_{i,j}t)^2}{t}\Big] &=& \Expect\big[\big(\widehat{D}_{i,j}(1)\big)^2\big],
\end{eqnarray*}
as well as,
\begin{equation*}
%\label{eq:convE2}
\lim_{t \to \infty} \Expect\Big[\frac{\big( D_{i_1,j_1}(t) - \lambda_{i_1,j_1}t \big)}{\sqrt{t}}
\cdot \frac{\big( D_{i_2,j_2}(t) - \lambda_{i_2,j_2}t \big)}{\sqrt{t}}\Big] = \Expect[
\widehat{D}_{i_1,j_1}(1)\,\cdot\,\widehat{D}_{i_2,j_2}(1) ].
\end{equation*}
Combining this implies that,
\begin{eqnarray*}
{\sigma}_{i \to j}^2 &=& \lim_{t \to \infty} \frac{\Var \big( D_{i,j}(t) \big)}{t}
=
\lim_{t \to \infty} \frac{\Var \big( D_{i,j}(t) - \lambda_{i,j} t \big)}{t} \\
&=&
\lim_{t \to \infty}  \frac{ \Expect [(D_{i,j}(t) - \lambda_{i,j} t)^2]}{t}
-
\Big(
\lim_{t \to \infty}  \frac{ \Expect [D_{i,j}(t) - \lambda_{i,j} t]}{\sqrt{t}}
\Big)^2 \\
&=& 
\Expect[
\big(\widehat{D}_{i,j}(1)\big)^2] - \big( \Expect[ \widehat{D}_{i,j}(1)] \big)^2
= \Var \big( \widehat{D}_{i,j}(1) \big) = \Sigma^{(D)}_{(i-1)K+j,\,(i-1)K+j}.
\end{eqnarray*}
Similarly,
\begin{eqnarray*}
{\sigma}_{i_1 \to j_1,i_2 \to j_2} &=& \lim_{t \to \infty} \frac{ \Cov\big( D_{i_1,j_1}(t), D_{i_2,j_2}(t) \big)}{t}\\
&=&
\lim_{t \to \infty} \frac{\Cov\big( D_{i_1,j_1}(t) - \lambda_{i_1,j_1} t, D_{i_2,j_2}(t) - \lambda_{i_2,j_2} t \big)}{t} \\
&=&
\lim_{t \to \infty}  \frac{ \Expect [(D_{i_1,j_1}(t) - \lambda_{i_1,j_1} t)(D_{i_2,j_2}(t) - \lambda_{i_2,j_2} t)]}{t}
\\
&&\hspace{2cm} 
-\Big(
\lim_{t \to \infty}  \frac{ \Expect [D_{i_1,j_1}(t) - \lambda_{i_1,j_1} t]}{\sqrt{t}}
\Big)
\Big(
\lim_{t \to \infty}  \frac{ \Expect [D_{i_2,j_2}(t) - \lambda_{i_2,j_2} t]}{\sqrt{t}}
\Big)
 \\
&=& \Cov \Big( \widehat{D}_{i_1,j_1}(1),\,\widehat{D}_{i_2,j_2}(1) \Big) 
= \Sigma^{(D)}_{(i_1-1)K+j_1,\,(i_2-1)K+j_2}. 
\end{eqnarray*}
\endproof
In establishing the UI, we make use of the following useful inequality: 
For $r>1$ and arbitrary real values $z_1,\ldots,z_K$,
\begin{equation}
\label{eq:powerIneq}
\Big|\sum_{k=1}^K z_k \Big|^r \le K^{r-1} \sum_{k=1}^K \big|z_k \big|^r,
\end{equation}
which is a simple consequence of Jensen's inequality.
We now establish the required  UI.

\begin{proposition}
\label{prop:UI1}
If \eqref{eq:barTdoesTheJob} and \eqref{eq:AUIass} hold,
%property (B1) holds 
then the families of random variables ${\cal D}_{i,j}^{(1)}$, ${\cal D}_{i,j}^{(2)}$ and ${\cal D}_{(i_1,j_1),(i_2,j_2)}$ are UI.
\end{proposition}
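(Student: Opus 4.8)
The plan is to reduce all three families to uniform integrability (UI) of the single quadratic family $\{(D_{i,j}(t)-\nu_{i,j}t)^2/t : t\ge t_0\}$ for each fixed $(i,j)$, and then to establish the latter through the zero-service-time representation of Section~\ref{sec:zerTime}. For the reductions I would invoke three standard facts: finite sums and scalar multiples of UI families are UI; a family dominated in absolute value by a UI family is UI; and a family bounded in $L^{1+\delta}$ for some $\delta>0$ is UI. Since $|D_{i,j}(t)-\nu_{i,j}t|/\sqrt t \le \tfrac12\big(1+(D_{i,j}(t)-\nu_{i,j}t)^2/t\big)$, UI of the quadratic family yields UI of ${\cal D}^{(1)}_{i,j}$; and since $|xy|\le\tfrac12(x^2+y^2)$, UI of the quadratic families for $(i_1,j_1)$ and $(i_2,j_2)$ yields UI of ${\cal D}_{(i_1,j_1),(i_2,j_2)}$. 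Thus everything reduces to the quadratic families.

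Next I would pass to the zero-service-time view. Writing $D_{i,j}(t)=\breve D_{i,j}(t)-\breve N_{i,j}(t)$ with $\breve N_{i,j}(t)\ge0$, recalling from \eqref{eq:DnLim} that $\nu_{i,j}t$ is the fluid value of $\breve D_{i,j}$, and using $(a-b)^2\le2a^2+2b^2$, I obtain $(D_{i,j}(t)-\nu_{i,j}t)^2\le 2(\breve D_{i,j}(t)-\nu_{i,j}t)^2+2\breve N_{i,j}(t)^2$. It therefore suffices to prove that $\{(\breve D_{i,j}(t)-\nu_{i,j}t)^2/t\}$ and $\{\breve N_{i,j}(t)^2/t\}$ are each UI. This cleanly separates the pure arrival/routing fluctuation from the contribution of jobs still in the system, and handles the positive and negative deviations of $D_{i,j}$ simultaneously.

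For the first family I would use the exact decomposition $\breve D_{i,j}(t)-\nu_{i,j}t = P_1(t)+P_2(t)$, where $P_2(t)=\sum_k m_k(i,j)(A_k(t)-\alpha_k t)$ and $P_1(t)=\sum_k\sum_{\ell=1}^{A_k(t)}(N_{i,j|k}(\ell)-m_k(i,j))$; this is exact because $\sum_k\alpha_k m_k(i,j)=\nu_{i,j}$ (the computation behind \eqref{eq:MijFirst}). Applying Lemma~\ref{lem:ineq1} with $r=2$ to each sum over $k$ reduces matters to the individual summands. Each term $m_k(i,j)^2(A_k(t)-\alpha_k t)^2/t$ is a scalar multiple of the arrival family and is UI by the standing assumption \eqref{eq:AUIass}. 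For the routing term $R_k(t):=\sum_{\ell=1}^{A_k(t)}(N_{i,j|k}(\ell)-m_k(i,j))$ I would condition on $A_k(t)$—which is independent of the i.i.d.\ mean-zero increments $N_{i,j|k}(\ell)-m_k(i,j)$—and use that $N_{i,j|k}$ has a finite fourth moment (indeed all moments, by the geometrically light absorption-time tail of the transient chain with sub-stochastic $P$) to get $\Expect[R_k(t)^4]=O(\Expect[A_k(t)^2])$. Since \eqref{eq:AUIass} forces $\sup_t\Expect[(A_k(t)-\alpha_k t)^2]/t<\infty$, hence $\Expect[A_k(t)^2]=O(t^2)$, this gives $\Expect[(R_k(t)^2/t)^2]=O(1)$, so $\{R_k(t)^2/t\}$ is bounded in $L^2$ and UI. Combining the pieces yields UI of $\{(\breve D_{i,j}(t)-\nu_{i,j}t)^2/t\}$. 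The key maneuver here is that the extra moment needed to upgrade from an $L^1$ bound to UI is supplied by the light-tailed routing counts rather than by the arrivals, for which \eqref{eq:AUIass} gives only second-moment control.

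The remaining term $\breve N_{i,j}(t)$ is the main obstacle, and it is exactly where a moment bound on the queue becomes indispensable. Using \eqref{eq:Nbreve_sums} and conditioning on $Q(t)$, the conditional-variance rule gives $\Expect[\breve N_{i,j}(t)^2]\le C_1\sum_k\Expect[Q_k(t)]+C_2\sum_k\Expect[Q_k(t)^2]$ with constants depending only on the routing moments. Invoking assumption (A3), together with $\Expect[Q_k(t)]\le\sqrt{\Expect[Q_k(t)^2]}$, bounds the right-hand side by $o(t)$, so that $\Expect[\breve N_{i,j}(t)^2/t]\to0$; hence $\{\breve N_{i,j}(t)^2/t\}$ converges to $0$ in $L^1$ and is UI (the behaviour on compact $t$-ranges being routine), and the combined family $\{(D_{i,j}(t)-\nu_{i,j}t)^2/t\}$ is UI. I expect this queue term to be the delicate point: the arrival and routing pieces need only the second-moment UI of the arrivals and the light tails of $N_{i,j|k}$, whereas the negative deviations of $D_{i,j}$ below its fluid value are controlled only through the queue's second moment, which is precisely the content of (A3) and the reason it is required to establish UI.
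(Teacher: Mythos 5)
Your proposal is correct, and its overall skeleton coincides with the paper's: reduce all three families to UI of the quadratic families ${\cal D}_{i,j}^{(2)}$ (the paper does this by citing Theorem~4.7, Ch.~5 of \cite{gut2005probability} with $p=q=2$; your elementary inequalities $|x|\le\tfrac12(1+x^2)$ and $|xy|\le\tfrac12(x^2+y^2)$ are an equivalent substitute), then split $D_{i,j}=\breve D_{i,j}-\breve N_{i,j}$ exactly as the paper does, and control $\breve N_{i,j}(t)^2/t$ in $L^1$ via the random-sum second-moment computation and assumption (A3) --- this last step is essentially identical to the paper's (which invokes Remark~5.4, Ch.~5 of \cite{gut2005probability} for the passage from vanishing moments to UI). Where you genuinely diverge is the central term $\breve D_{i,j}(t)-\nu_{i,j}t$: the paper centers the summands, extends the random sum to index $A_k(t)+1$ so as to sit inside the stopped-random-walk framework, and then cites Theorems~6.1--6.3 of \cite{gut2009stopped} together with UI of $\{(A_k(t)+1)/t\}$; you instead use the exact decomposition into $P_2(t)=\sum_k m_k(i,j)(A_k(t)-\alpha_k t)$, dispatched directly by \eqref{eq:AUIass}, and $P_1(t)=\sum_k R_k(t)$, for which you exploit independence of $A_k(\cdot)$ from the routing variables to get the conditional fourth-moment bound $\Expect[R_k(t)^4]=O(\Expect[A_k(t)^2])=O(t^2)$, whence $\{R_k(t)^2/t\}$ is $L^2$-bounded and therefore UI. Your route is more self-contained: it replaces the external stopped-random-walk machinery (and the $A_k(t)+1$ stopping-time trick) by an elementary moment computation, at the cost of needing fourth moments of $N_{i,j|k}$ --- which is free here, since these counts have geometrically decaying tails, and the bound $\Expect[A_k(t)^2]=O(t^2)$ does follow from \eqref{eq:AUIass} as you claim. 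One further point in your favor: although the proposition as stated mentions only (A1), both your argument and the paper's proof actually use \eqref{eq:AUIass} and assumption (A3) (the latter precisely for the $\breve N_{i,j}$ term), so you correctly identified the hypotheses that are really at work.
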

\proof{}
We first note that UI of ${\cal D}_{i,j}^{(2)}$ implies UI of the other two types of families as well, due to Theorem~4.7  (with $p=q=2$) in Chapter~5 of \cite{gut2005probability}. To establish UI of ${\cal D}_{i,j}^{(2)}$ , recall from the previous section the representation $D_{i,j}(t) = \breve{D}_{i,j}(t) - \breve{N}_{i,j}(t)$, where $\breve{D}_{i,j}(t)$ is the number of instantaneous passes on flow $i\rightarrow j$, and $\breve{N}_{i,j}(t)$ is the number of future passes on that flow. By applying  \eqref{eq:powerIneq} for $r=1$ and $r=2$, respectively,  we have:
\begin{eqnarray*}
\Big| \frac{ D_{i,j}(t) - \lambda_{i,j}t}{\sqrt{t}} \Big| 
& \le &
\Big| \frac{ \breve{D}_{i,j}(t) - \lambda_{i,j}t}{\sqrt{t}} \Big| + \Big| \frac{\breve{N}_{i,j}(t)}{\sqrt{t}} \Big|, \\
\Big|  \frac{ \big( D_{i,j}(t) - \lambda_{i,j}t \big)^2}{t} \Big| 
& \le &
2
\Big(
\Big| \frac{ \breve{D}_{i,j}(t) - \lambda_{i,j}t}{\sqrt{t}} \Big|^2 + \Big| \frac{\breve{N}_{i,j}(t)}{\sqrt{t}} \Big|^2
\Big).
\end{eqnarray*}
It thus suffices to show that,
\[
\breve{{\cal D}}_{i,j}^{(2)} :=\left\{  \frac{ \big( \breve{D}_{i,j}(t) - \lambda_{i,j}t \big)^2}{t},\,\, t\ge t_0 \right\},
\quad
\mbox{and}
\quad
\breve{{\cal N}}_{i,j}^{(2)}
:=
\left\{  \frac{ \big( \breve{N}_{i,j}(t) \big)^2}{t},\,\, t\ge t_0 \right\},
\]
are UI.

To see $\breve{{\cal D}}_{i,j}^{(2)}$ is UI it is useful to denote,
\[
\breve{D}_{i,j|k}(t) := \sum_{\ell=1}^{\hl{E}_k(t)} N_{i,j|k}(\ell)
\quad
\mbox{and}
\quad
\lambda_{i,j|k} := \alpha_k \Expect[N_{i,j|k}].
\]
Note that since, $\breve{D}_{i,j}(t) = \sum_{k=1}^K\breve{D}_{i,j|k}(t)$, we have $\sum_{k=1}^K \lambda_{i,j|k} = \lambda_{i,j}$. We now get,
\begin{align*}
\left|\frac{ \big( \breve{D}_{i,j}(t) - \lambda_{i,j}t \big)^2}{t}\right|
&=
\left|\frac{ \big( \sum_{k=1}^K\breve{D}_{i,j|k}(t) - (\sum_{k=1}^K\lambda_{i,j|k})t \big)^2}{t}\right|\\
&=
\left(\frac{\big|  \sum_{k=1}^K \big(\breve{D}_{i,j|k}(t) - \lambda_{i,j|k}t \big)\big|}{\sqrt{t}} \right)^2\\
&\le
K \sum_{k=1}^K\left| \frac{  \breve{D}_{i,j|k}(t) - \lambda_{i,j|k}t }{\sqrt{t}} \right|^2.
\end{align*}
In the above we again used \eqref{eq:powerIneq} with $r = 2$. We now need to show that the families
\[
\left\{  \frac{ \big( \breve{D}_{i,j|k}(t) - \lambda_{i,j|k}t \big)^2}{t},\,\, t\ge t_0 \right\},
\]
are UI:
{\small
\begin{align*}
\frac{ \big( \breve{D}_{i,j|k}(t) - \lambda_{i,j|k}t \big)^2}{t} 
& =
\frac{ \big(\sum_{\ell=1}^{\hl{E}_k(t)} N_{i,j|k}(\ell) - \lambda_{i,j|k} t\big)^2}{t} \\
&= 
\frac{ \big(\sum_{\ell=1}^{\hl{E}_k(t)+1} N_{i,j|k}(\ell) - \lambda_{i,j|k} t  - N_{i,j|k}(\hl{E}_k(t)+1)\big)^2}{t} \\
&= 
\frac{ \big(\sum_{\ell=1}^{\hl{E}_k(t)+1} \big(N_{i,j|k}(\ell)  - \frac{\lambda_{i,j|k}}{\alpha_k} \big)  + (\hl{E}_k(t)+1)\frac{\lambda_{i,j|k}}{\alpha_k}- \lambda_{i,j|k} t  - N_{i,j|k}(\hl{E}_k(t)+1)\big)^2}{t} \\
\displaybreak
&=
\frac{ \big(\sum_{\ell=1}^{\hl{E}_k(t)+1} \big(N_{i,j|k}(\ell)  - \frac{\lambda_{i,j|k}}{\alpha_k} \big)  + ((\hl{E}_k(t)+1) - \alpha_k t) \frac{\lambda_{i,j|k}}{\alpha_k}  - N_{i,j|k}(\hl{E}_k(t)+1)\big)^2}{t} \\
&\le
3
\left(
\frac{ \big(\sum_{\ell=1}^{\hl{E}_k(t)+1} \big(N_{i,j|k}(\ell)  - \frac{\lambda_{i,j|k}}{\alpha_k} \big)  \big)^2}{t} \right.\\
                   &\hspace{2cm}\left. + \frac{\big( ((\hl{E}_k(t)+1) - \alpha_k t) \frac{\lambda_{i,j|k}}{\alpha_k} \big)^2}{t}  + \frac{\big( N_{i,j|k}(\hl{E}_k(t)+1)\big)^2}{t}
\right).
\end{align*}
}

The first term is a stopped random walk with zero mean increments where $\hl{E}_k(t)+1$ is UI by \eqref{eq:AUIass}. Thus due to Theorems 6.1--6.3 in \cite{gut2009stopped}, the first term is UI. The second term is UI again by \eqref{eq:AUIass}.  The third term is obviously UI since the family $N_{i,j|k}(\cdot)$ is i.i.d.

To show that $\breve{{\cal N}}_{i,j}^{(2)}$ is UI, we need to show that the second moment of $\breve{{N}}_{i,j}(t)/\sqrt{t}$ converges (to zero). This approach is due to Remark~5.4 in Chapter~5 of  \cite{gut2005probability}.
% (to be replaced by Billingsley, section 3, after checking). 
Define $\breve{{N}}^Q_{i,j|k}(t) :=\sum_{\ell = 1}^{Q_k(t)} N_{i,j|k}(\ell)$, where $Q_k(t)$ is the queue length at node $k$ at time $t$. Then the expectation and variance of the random sums $\breve{{N}}^Q_{i,j|k}(t)$, and hence also (by (\ref{eq:Nbreve_sums})) of $\breve{{\cal N}}_{i,j}(t)$, can be expressed in the expectations and variances of $Q_k(t)$ and $N_{i,j|k}(\ell)$, all of which are $O(1)$ by \eqref{eq:barTdoesTheJob}.
% (by property~(B3)) and finite respectively. 
Thus the result follows. 
\endproof

{\em Proof of Theorem~\ref{thm:main} (ii):} Proposition~\ref{prop:UI1} relies on \eqref{eq:barTdoesTheJob} and \eqref{eq:AUIass} which follow from the assumptions of the theorem. Proposition~\ref{prop:UI1} then establishes UI of the families needed for Proposition~\ref{prop:asympVarDiffEqual} which exactly states (ii).
\qed

%%%%%%%%%%%%%%%%%%%%%%%%%%%%%%%%%%%%%%%%%%%%%%%%%%
%%%%%%%%%%%%%%%%%%%%%%%%%%%%%%%%%%%%%%%%%%%%%%%%%%
%%%%%%%%%%%%%%%%%%%%%%%%%%%%%%%%%%%%%%%%%%%%%%%%%%
%%%%%%%%%%%%%%%%%%%%%%%%%%%%%%%%%%%%%%%%%%%%%%%%%%
%%%%%%%%%%%%%%%%%%%%%%%%%%%%%%%%%%%%%%%%%%%%%%%%%%
%%%%%%%%%%%%%%%%%%%%%%%%%%%%%%%%%%%%%%%%%%%%%%%%%%
%%%%%%%%%%%%%%%%%%%%%%%%%%%%%%%%%%%%%%%%%%%%%%%%%%
%%%%%%%%%%%%%%%%%%%%%%%%%%%%%%%%%%%%%%%%%%%%%%%%%%
%%%%%%%%%%%%%%%%%%%%%%%%%%%%%%%%%%%%%%%%%%%%%%%%%%
\section{Numerical Example}
\label{sec:example}
\begin{figure}[h]
{\includegraphics[width=3.5in]{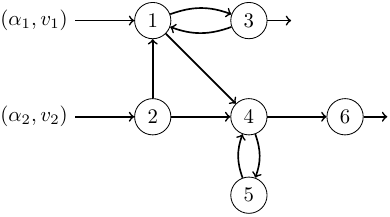}}
\caption{Example network.} 
\label{fig1}
\end{figure}
Consider the $6$ node network illustrated on Figure~\ref{fig1} with parameters,
\[
P=
\left[
\begin{array}{cccccc}
0  & 0 & 1/2  & 1/2  & 0  & 0   \\
1/2  & 0  & 0  & 1/2  & 0  & 0   \\
1/2 & 0  & 0  & 0  & 0  & 0   \\
0  & 0  & 0  & 0  & 1/2  & 1/2   \\
0  & 0  & 0  & 1  & 0  & 0  \\
0  & 0  & 0  & 0  & 0  & 0   \\
\end{array}
\right]
,
\quad
\mu = 
\left[
\begin{array}{c}
8.25 \\
8.25 \\
5 \\
8.25 \\
5 \\
5 
\end{array}
\right],
\]
\begin{equation}
\label{eq:exampleParams}
\alpha = 
\left[ 1 \,\,\,\, 4 \,\,\,\, 0 \,\,\,\, 0 \,\,\,\, 0 \,\,\,\, 0 \right]\tr,\, \, \text{ and }
\quad
v^2 = \left[2 \,\,\,\, 2 \,\,\,\, 0 \,\,\,\, 0 \,\,\,\, 0  \,\,\,\, 0 \right]\tr.
\end{equation}
For this network,
\[
\lambda = (I-P\tr)^{-1} \alpha = \Big[4\,\,\,\, 4 \,\,\,\, 2 \,\,\,\, 8 \,\,\,\, 4 \,\,\,\, 4 \Big]\tr < \mu.
\]
Hence if considered as a single class network, assumptions \hl{(A1) — (A6)} hold and the network is stabilized by any work conserving policy.
Note that besides verification of the above inequality, the values of $\mu$ do not play a further role in the calculation of the variability parameters. Nevertheless, we use them in a simulated example  below.

It is now a straight forward matter to use \eqref{eq:covExpression} (or alternatively \eqref{eq:mainThmiii-1}--\eqref{eq:mainThmiii-2}) from our main theorem to obtain variability parameters. Note that in this process, the only matrix that requires inversion is $(I-P\tr)$. The rest of the calculations follow from matrix composition, addition and multiplication operations.

The resulting matrix $\Sigma^{(D)}$ is of dimension $36 \times 36$. We present the diagonals of this matrix (which are  $\sigma^2_{i\to j}$) in Table~\ref{tab1}.

\begin{table}[h]
%\begin{center}
{Covariance values of flows $\boldsymbol{i \to j}$. \\ \label{tab1}}
{\begin{tabular}{r|cccccc}
%\down 
$i \backslash j $    &   1	&	2	&	3	&	4	&	5	&	6 \\
\hline
%\up 
1    &   0	&	0	&	$32/9$	&	$20/9$	&	0	&	0 \\
%\up 
2    &   $3/2$	&	0	&	0	&	$3/2$	&	0	&	0 \\
%\up 
3    &   $31/18$	&	0	&	0	&	0	&	0	&	0 \\
%\up 
4    &   0	&	0	&	0	&	0	&	$199/18$	&	$55/18$ \\
%\up 
5    &   0	&	0	&	0	&	$199/18$	&	0	&	0 \\
%\up 
6    &   0	&	0	&	0	&	0	&	0	&	0 \\
\end{tabular}}
%\end{center}
{}
\end{table}

As a further illustration we present a few selected non-diagonal elements of $\Sigma^{(D)}$:
\[
\sigma_{2\to 1, 2 \to 4} = -1/2,
\quad
\sigma_{4\to 5, 5 \to 4} = 199/18,
\quad
\sigma_{1\to 3, 4 \to 6} = 5/9,
\quad
\sigma_{1\to 3, 2 \to 4} = -1/3.
\]
In discussing these values, it is good to consider the {\em  asymptotic correlation coefficient}:
\[
r_{i_1 \to i_2,  j_1 \to j_2} : = 
\frac{\sigma_{i_1\to i_2, j_1 \to j_2}}{\sqrt{\sigma_{i_1\to i_2}^2 \sigma_{j_1\to j_2}^2}}.
\]
For these selected flow pairs it evaluates to
\[
r_{2\to 1, 2 \to 4} = -\frac{1}{3},
\quad
r_{4\to 5, 5 \to 4} = 1,
\quad
r_{1\to 3, 4 \to 6}  \approx 0.16856,
\quad
r_{1\to 3, 2 \to 4} \approx -0.14434.
\]

The first two values are easily explained in our example, the other two are not. For $r_{2\to 1, 2 \to 4}$ consider the Bernoulli splitting at the output of queue $2$ and the fact there is no feedback to this queue. Recall that in this case $\sigma_{2\to 1, 2 \to 4} = (v_2^2-\alpha_2)/4$ for $v_2^2=2, \alpha_2=4$. In this case the asymptotic correlation coefficient is $(v_2^2-\alpha_2)/(v_2^2+\alpha_2)$. In considering $r_{4\to 5, 5 \to 4}$ observe that there is no random routing in this part of the network: All jobs that enter $5$ come from $4$ and then return to $5$. 

We are not aware of an ``easy'' explanation of the values of $r_{1\to 3, 4 \to 6}$ and $r_{1\to 3, 2 \to 4}$.  It is insightful to see that as in this case, some correlations between flows are positive while others are negative. We do not know of an a priori way of finding out the sign of these correlations without using our main result. In fact, evaluating $\Sigma^{(D)}$ with $v_2$ as free variable, we get,
\[
%\sigma_{1\to 3, 2 \to 4} = \frac{1}{6}\big( v_2^2 - 4 \big),
%\qquad
r_{1\to 3, 2 \to 4} = \frac{v_2^2-4}{\sqrt{(v_2^2+4)(v_2^2+30)}}.
\]
We thus see that the sign of the correlation between those two flows depends on the variability of the arrival process into $2$. Observe that in the asymptotically uncorrelated case (i.e. when $v_2 = 4$),
\[
\lim_{t \to \infty} \frac{\Var\big(\hl{E}_2(t) \big)}{\Expect[\hl{E}_2(t)]} = 1,
\]
as is for a Poisson process.  This is consistent with the fact that in the case of a classic Jackson network (Poisson arrival process and exponential processing times) case, since node $2$ has no feedback its output is a Poisson process and splitting of departures from node $2$ results in two independent Poisson flows, $2\to 1$ and $2 \to 4$.  The first of these flows affects $1 \to 3$ but not the second. Hence in such a case it is expected that $r_{1\to 3, 2 \to 4} = 0$.

\subsection*{Arrivals to Individual Queues}

Moving onto arrival processes into individual queues, application of our main result yields:
\begin{align}
\label{eqn:SigmaE}
\Sigma^{(\hl{A})} =
\left[
\begin{array}{cccccc}
68/9   &	4/3	&	40/9	&	44/9	&	22/9	&	22/9 \\
   &	2	&	2/3	&	10/3	&	5/3	&	5/3 \\
   &		&	32/9	&	10/9	&	5/9	&	5/9 \\
   &		&		&	182/9	&	127/9	&	55/9 \\   
   &		&		&		&	199/18	&	55/18 \\      
   &		&		&		&		&	55/18 \\         
\end{array}
\right].
\end{align}
Observe that $\sigma^2_2=2$ as expected since there are only exogenous arrivals to this queue. Further since all jobs that pass through queue $5$ eventually also pass through queue $6$ we have,
\[
\sigma_{k,5} =\sigma_{k,6}, \,\,\,\, k= 1,2,3,6.
\]

It is the diagonal elements of $\Sigma^{(\hl{A})}$ that may be useful for network decomposition approximations (which we do not explore further in this paper). Normalizing the diagonals by $\lambda$ we get,
\begin{equation}
\label{eq:SCVvals1}
c^2 =
\left[
\begin{array}{cccccc}
1.89 &
0.5 &
1.78 &
2.53 &
2.76 &
0.76
\end{array}
\right]\tr.
\end{equation}
%It may be interesting to compare the accuracy of network decomposition approximations for mean queue sizes based on these $c^2$ values to the approximations obtained by other methods. We do not carry out such a comparison here since it is not the purpose of this paper.

%%%%%%%%%%%%%%%%%%%%%%%%%%%%%%%%%%%%%%%%%%%%%%%%%%
%%%%%%%%%%%%%%%%%%%%%%%%%%%%%%%%%%%%%%%%%%%%%%%%%%
%%%%%%%%%%%%%%%%%%%%%%%%%%%%%%%%%%%%%%%%%%%%%%%%%%
%%%%%%%%%%%%%%%%%%%%%%%%%%%%%%%%%%%%%%%%%%%%%%%%%%
%%%%%%%%%%%%%%%%%%%%%%%%%%%%%%%%%%%%%%%%%%%%%%%%%%
%%%%%%%%%%%%%%%%%%%%%%%%%%%%%%%%%%%%%%%%%%%%%%%%%%
%%%%%%%%%%%%%%%%%%%%%%%%%%%%%%%%%%%%%%%%%%%%%%%%%%
%%%%%%%%%%%%%%%%%%%%%%%%%%%%%%%%%%%%%%%%%%%%%%%%%%
%%%%%%%%%%%%%%%%%%%%%%%%%%%%%%%%%%%%%%%%%%%%%%%%%%
\subsection*{Comparison with the Innovations Method}
\label{sec:example}
To compare our method with the {\it innovations method} of \cite{kim2011modeling} (see also \cite{KimRaviColm2005}), we now describe how to compute the asymptotic covariance terms of arrival processes, and the asymptotic variability parameters of flows using the innovations method. Towards that end we outline a step by step procedure to establish equation (42) of \cite{kim2011modeling} under the assumption that all the service times are zero; that is, $\rho_i = 0$, for all $i = 1,2,\dots, K$ (using the notation of \cite{kim2011modeling}). 

Let $\hl{E}^{(0)}_i(t) := \hl{E}_i(t) - \Expect\left[ \hl{E}_i(t)\right]$ be the normalized arrival process into the queue $i$. Similarly, define $\hl{A}^{(0)}_i(t)$ and $D^{(0)}_{i,j}(t)$ using $\hl{A}_i(t)$ and $D_{i,j}(t)$, respectively. Further, let $X(t)$ and $Y(t)$ be two $K^2 + K$ dimensional column vectors defined by
\[
X(t) = \begin{bmatrix}
				\hl{E}^{(0)}(t) \\
				\zeta(t)
             \end{bmatrix},            
             \qquad 
             \text{ and }
             \qquad
Y(t) = \begin{bmatrix}
				\hl{A}^{(0)}(t) \\
				D^{(0)}(t)
             \end{bmatrix},
\]
where 
\begin{align*}
\hl{E}^{(0)}(t) &= \left[ \hl{E}^{(0)}_1(t), \dots, \hl{E}^{(0)}_K(t)\right]\tr, \\
\hl{A}^{(0)}(t) &= \left[ \hl{A}^{(0)}_1(t), \dots, \hl{A}^{(0)}_K(t)\right]\tr, \\
D^{(0)}(t) &= \left[D^{(0)}_{1,1}(t), \dots, D^{(0)}_{1,K}(t), D^{(0)}_{2,1}(t), \dots, D^{(0)}_{K,1}(t), \dots, D^{(0)}_{K,K}(t) \right]\tr,
\end{align*}
and $\zeta(t) = \left[\zeta_{1,1}(t), \dots,  \zeta_{1,K}(t), \zeta_{2,1}(t), \dots, \zeta_{K,1}(t), \dots, \zeta_{K,K}(t)\right]\tr$ is a column vector of so-called innovation processes assumed to have the following properties:
\begin{align*}
\Expect[\zeta_{i,j}(t)] &= 0,\\
\Cov\left( \zeta_{i, j}, \, \zeta_{i, k}\right) &= p_{i,j} (\delta_{j,k} - p_{i,k}) \Expect\left[ \hl{A}_{i}(t)\right],\\
\Cov\left(\hl{A}_i(t),\, \zeta_{i, j}(t)\right) &= 0,\\
\Cov\left(\hl{E}_i(t),\, \zeta_{j, k}(t)\right) &= 0\,\, \text{ for all }\,\, i, j \text{ and } k,\\
\Cov\left( \zeta_{i_1, j_1}, \, \zeta_{i_2, j_2}\right) &= 0 \,\, \text{ for }\, i_1 \neq i_2, \text{ and for all } j_1, j_2 .
\end{align*}
See Section~4 of \cite{kim2011modeling} for more details on the innovations. 

With these processes and assumptions at hand, \cite{kim2011modeling} presents the following (adapted here to our notation):
Let 
\[
F = \begin{bmatrix}
    0 & B \\
    P_c & 0
    \end{bmatrix},
\]
Now using $\rho_i = 0$, for all $i = 1,2,\dots, K$, in \cite{kim2011modeling},  equation (35) of \cite{kim2011modeling} can be re-expressed as 
\[
Y(t) = F\, Y(t) +  X(t),
\]
or equivalently, $Y(t) = (I - F)^{-1}\, X(t)$. 
Non-singularity of $I - F$ follows from that of $I - P\tr$. To see this, first note that  
\[
I - F = \begin{bmatrix}
    I_K & -B \\
    -P_c & I_{K^2}
    \end{bmatrix}.\,\,
\]
Using {\em Banachiewicz inversion formula} for the inverse of a partitioned matrix (see for example \cite{Zhang06Schur}), we have
\begin{align*}
(I - F )^{-1} &= \begin{bmatrix}
    (I_K - B P_c)^{-1}& (I_K - B P_c)^{-1} B \\
    P_c (I_K - B P_c)^{-1} & I_{K^2} + P_c (I_K - B P_c)^{-1} B
    \end{bmatrix} \\
    &= \begin{bmatrix}
    (I_K - P\tr)^{-1}& (I_K - P\tr)^{-1} B \\
    P_c (I_K - P\tr)^{-1} & I_{K^2} + P_c (I_K - P\tr)^{-1} B
    \end{bmatrix}\\
    &= \begin{bmatrix}
    G \\
    H
    \end{bmatrix},
\end{align*}
where the second equality follows from the fact that $B\, P_c = P\tr$ and the last equality follows from the definitions of $G$ and $H$, which are given by \eqref{eqn:G_def} and \eqref{eqn:H_def}, respectively. Hence the claim that $I - F$ is non-singular follows from the assumption that $I - P\tr$ is non-singular.  

Further, we have that $\Expect\left[Y(t)\, Y(t)\tr\right] =  \begin{bmatrix}
    G \\
    H
    \end{bmatrix}\, \Expect\left[X(t)\, X(t)\tr\right]\,  \begin{bmatrix}
    G\tr &    H\tr
    \end{bmatrix}$, and as a consequence,
\begin{align*}
\widetilde\Sigma := \lim_{t \to \infty} \frac{\Expect\left[Y(t)\, Y(t)\tr\right]}{t} = \begin{bmatrix}
    G \\
    H
    \end{bmatrix}\, \left(\lim_{t \to \infty} \frac{\Expect\left[X(t)\, X(t)\tr\right]}{t}\right) \, \begin{bmatrix}
    G\tr &    H\tr
        \end{bmatrix}
\end{align*}
which is equivalent to equation (42) of \cite{kim2011modeling}. Note that, from the above properties of the innovations, 
\begin{align*}
\lim_{t \to \infty} \frac{\Expect\left[X(t)\, X(t)\tr \right]}{t} &= \begin{bmatrix}
															\lim_{t \to \infty} \frac{\Expect\left[ \hl{E}^{(0)}(t)\, \hl{E}^{(0)}(t)\tr \right]}{t} & 0\\
															   0 & \lim_{t \to \infty} \frac{\Expect\left[ \zeta(t)\, \zeta(t)\tr \right]}{t}
                                                            \end{bmatrix}\\
                                                          &= \begin{bmatrix}
															\mbox{diag}(v^2) & 0\\
															   0 & \lim_{t \to \infty} \frac{\Expect\left[ \zeta(t)\, \zeta(t)\tr  \right]}{t}
                                                            \end{bmatrix}\\
                                                           &= \Sigma^{(P)},
\end{align*}
where $\Sigma^{(P)}$ is defined by \eqref{eqn:Sigma_P}, and the last equality follows from the above properties of the innovation processes and the fact that $\lim_{t \to \infty} \frac{\Expect[\hl{A}_i(t)]}{t} = \lambda_i$ for all $i$. Therefore, $$\widetilde\Sigma = \begin{bmatrix} G \\ H \end{bmatrix} \Sigma^{(P)} \begin{bmatrix} G\tr & H\tr \end{bmatrix} = \begin{bmatrix} G\, \Sigma^{(P)}\, G\tr & G\, \Sigma^{(P)}\, H\tr \\ H\, \Sigma^{(P)}\, G\tr & H\, \Sigma^{(P)}\, H\tr \end{bmatrix},$$ and thus
$\sigma_{k_1,k_2} = \widetilde\Sigma_{k_1,k_2}$ and $\sigma_{i_1 \to j_1, i_2 \to j_2} = \widetilde\Sigma_{i_1*K + j_1, i_2*K + j_2}$. 
Observe that the innovations method and the results of Theorem~\ref{thm:main} (i) provide essentially the same expressions.

Finally, observe that the virtue of the innovations method in \cite{kim2011modeling} is that it also allows (and  focuses on) cases where $\rho_i$'s (in that paper) are not 0. However, in such cases, all calculations are merely a heuristic. Further, as that was not the focus of \cite{kim2011modeling}, the limits and model assumptions in that paper are not rigorously justified as in the current paper.

\subsection*{Simulation Results}

To further illustrate our result and explore the effect of different policies and constraints on the variance of flows, we carried out a Monte-Carlo simulation of the example network.

In the simulation we set the service distributions of queue $k$ to be distributed as a sum of two i.i.d. exponential random variables, each with mean $(2 \mu_k)^{-1}$. This results in a so-called Erlang~$2$ distribution (having a squared coefficient of variation of $1/2$) with mean~$\mu_k^{-1}$.  

The arrival process, $\hl{E}_1(\cdot)$, is the more variable of the two arrival processes. It is taken to be a renewal process of inter-arrival times that are distributed as a mixture of two independent exponential random variables (hyper-exponential): \hl{with probability} $1/3$ a mean $2$ exponential and \hl{with probability} $2/3$ a mean $1/2$ exponential. This distribution has mean $1$ and squared coefficient of variation $2$ agreeing with $\alpha_1$ and $v_1^2$ as specified in~\eqref{eq:exampleParams}.

The arrival process, $\hl{E}_2(\cdot)$, is less variable. It is taken to be a renewal process with inter-arrival times that are Erlang $2$ distributed this time with mean $1/4$. This is in agreement with $\alpha_2$ and $v_2^2$ as specified in \eqref{eq:exampleParams}.

\vspace{5pt}

We consider two settings:

\begin{description}
\item {\bf Single-class}: Each queue has a dedicated (separate) server. This is a generalized Jackson network.
\item {\bf Multi-class}: Queues $1$ and $2$ are served by the same server under a non-pre-emptive priority policy giving priority to queue $1$. All other queues have their own server.  Note that in this case the load on the server of queues $1$ and $2$ is $\lambda_1/\mu_1 + \lambda_2/\mu_2 \approx 0.97<1$. That is, it is quite heavily loaded but is still stable. Note in general having a load of less than unity does not immediately imply that the system is stable yet for this simple case it can be shown that stability holds  under such a priority policy (c.f. \cite{bramsonBook2008}). 
\end{description}

Besides exemplifying the correctness of our theoretical results, the goal in this simulation set-up is to illustrate that while the asymptotic variability parameters do not depend on service times and scheduling policies, the shape of the variance curve is in general influenced by such factors. 

%%%%%%%%%%%%%%%%%%%%%%%%%%%%%%%%%%
%%%%%%%%%%%%%%%%%%%%%%%%%%%%%%%%%%
%%%%%%%%%%%%%%%%%%%%%%%%%%%%%%%%%%
%%%%%%%%%%%%%%%%%%%%%%%%%%%%%%%%%%
%%%%%%%%%%%%%%%%%%%%%%%%%%%%%%%%%%
\begin{figure}[h!]
\centering
{\includegraphics[width=5.2in]{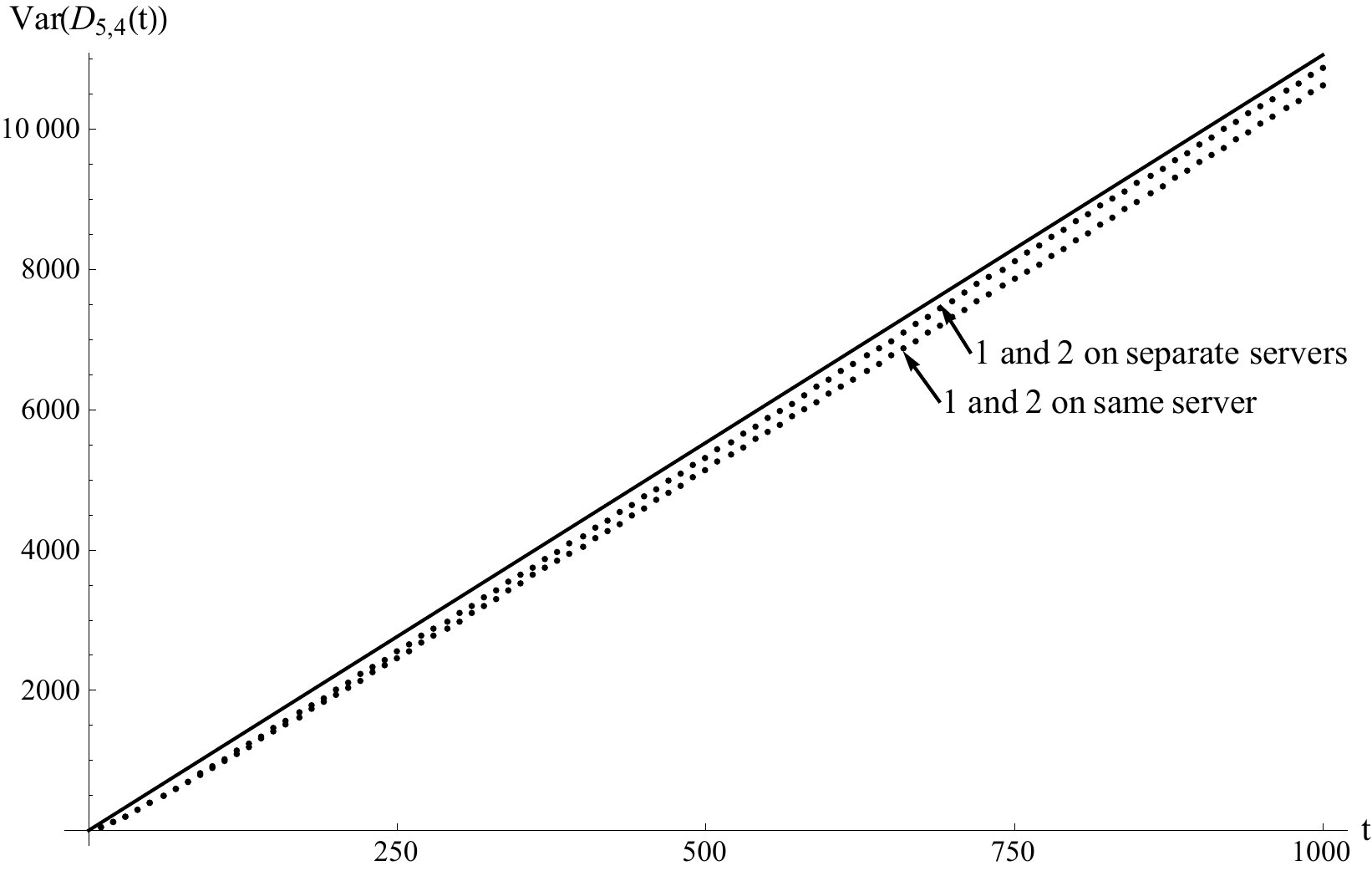}}
{\includegraphics[width=5.6in]{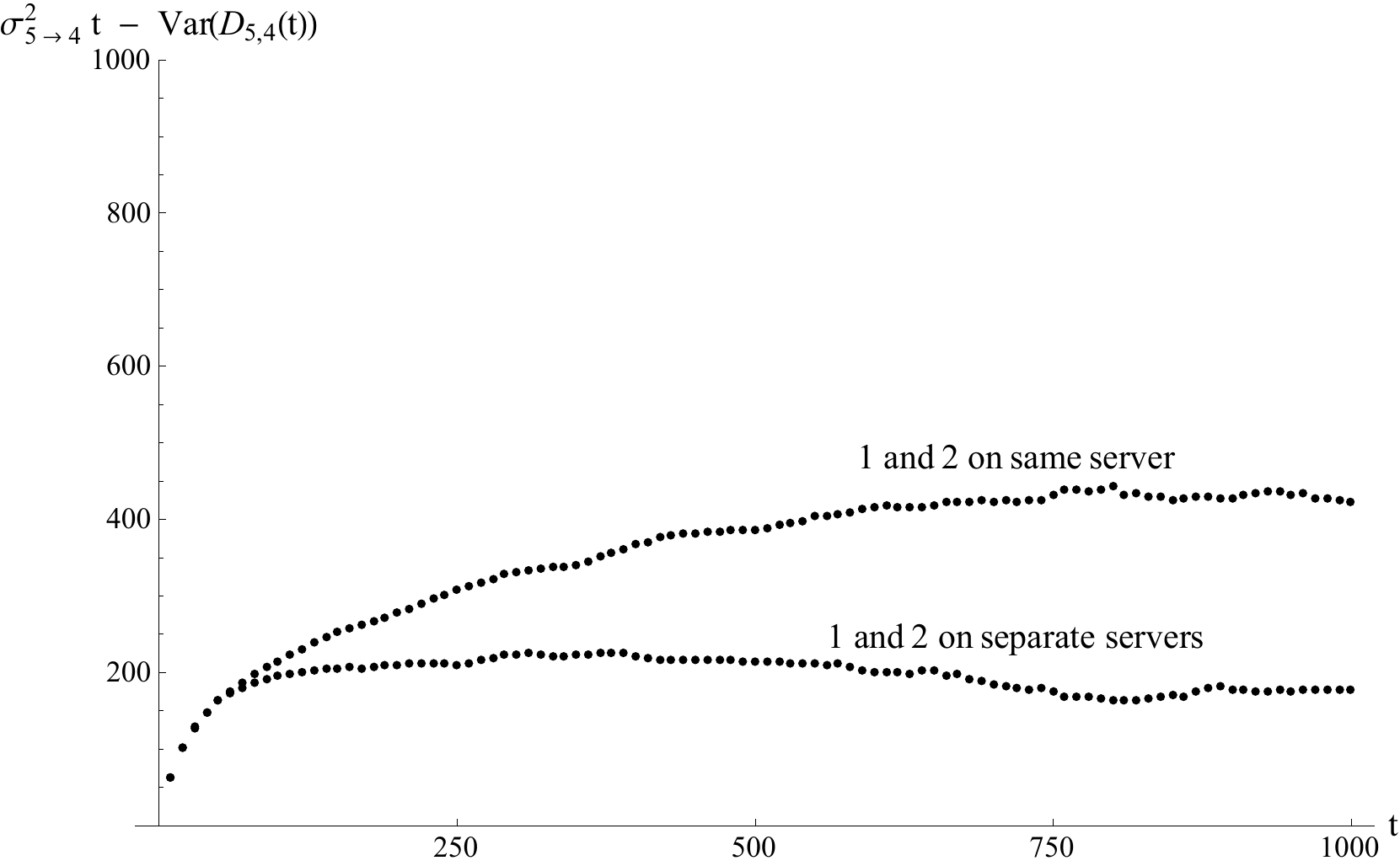}}
\caption{\label{fig2} Simulation estimates of ${\Var\big(D_{5 \to 4}(t)\big)}$ for two cases: single-class (${1}$ and ${2}$ on separate servers) and multi-class (${1}$ and ${2}$ on same server with a priority policy). The top graph illustrates the variance curve estimates (dotted) vs. the solid line ${\sigma^2_{5 \to 4} t}$.  The bottom graph shows the bias:  ${\sigma^2_{5 \to 4} t -\Var\big(D_{5 \to 4}(t)\big)}$. As is illustrated, both systems have the same asymptotic variance for ${D_{5\to4}(t)}$, yet their variance curves differ for finite~${t}$.}
\end{figure}
%%%%%%%%%%%%%%%%%%%%%%%%%%%%%%%%%%
%%%%%%%%%%%%%%%%%%%%%%%%%%%%%%%%%%
%%%%%%%%%%%%%%%%%%%%%%%%%%%%%%%%%%
%%%%%%%%%%%%%%%%%%%%%%%%%%%%%%%%%%
%%%%%%%%%%%%%%%%%%%%%%%%%%%%%%%%%%

We ran $2\times 10^5$ simulation runs of each case (single-class and multi-class) each for $1,000$ time units, starting at time $t=0$ with the system empty\footnote{The simulation was carried out using a simulation package written in C++: PRONETSIM. See \cite{nazarathy2008control}, Appendix~A, for details about this software.}. We then estimated $\Var\big(D_{5 \to 4}(t)\big)$ for each run over a grid of time points $t=20,40,60,\ldots,1000$, by taking the sample variance at each time point over $2 \times 10^5$ observations. Note that we purposely observe the flow $5 \to 4$ which is not directly adjacent to the multi-class server serving $1$ and $2$. 

Our main theorem applied to this example implies that in both the single-class and multi-class case, for non-small $t$,
\[
\Var\big(D_{5 \to 4}(t)\big) \approx \sigma^2_{5 \to 4} t = \frac{199}{18} t =  11.05\overline{5} \, t.
\]
This is illustrated in Figure~\ref{fig2} (top) where we plot the variance curves versus the approximation $\sigma^2_{5 \to 4} t$.  To take a closer look at the effect of single-class vs. multi-class we then plot the bias, $\sigma^2_{5 \to 4} t -\Var\big(D_{5 \to 4}(t)\big)$ on Figure~\ref{fig2} (bottom). It is indeed evident that different system characteristics yield different variance curves. 

It is somewhat expected that the multi-class case will have a higher bias, since in this case the server of $1$ and $2$ is under a heavier load ($0.97$). Further, in that case one can expect more ``bursts'' on the flow $2 \to 4$ since queue $2$ is served with low-priority.  These bursts perhaps ``propagate'' to flow $4\to 5$ and ultimately to the flow which we measure: $5 \to 4$. Nevertheless, such phenomena are not captured by the asymptotic quantities found in the current paper. It should be noted that in \cite{hautphenne2015intercept} second order properties of this sort are explored for elementary queueing systems such as the stable M/G/1 queue. It is not clear how to extend such an investigation to networks.

%%%%%%%%%%%%%%%%%%%%%%%%%%%%%%%%%%%%%%%%%%%%%%%%%%
%%%%%%%%%%%%%%%%%%%%%%%%%%%%%%%%%%%%%%%%%%%%%%%%%%
%%%%%%%%%%%%%%%%%%%%%%%%%%%%%%%%%%%%%%%%%%%%%%%%%%
%%%%%%%%%%%%%%%%%%%%%%%%%%%%%%%%%%%%%%%%%%%%%%%%%%
%%%%%%%%%%%%%%%%%%%%%%%%%%%%%%%%%%%%%%%%%%%%%%%%%%
%%%%%%%%%%%%%%%%%%%%%%%%%%%%%%%%%%%%%%%%%%%%%%%%%%
%%%%%%%%%%%%%%%%%%%%%%%%%%%%%%%%%%%%%%%%%%%%%%%%%%
%%%%%%%%%%%%%%%%%%%%%%%%%%%%%%%%%%%%%%%%%%%%%%%%%%
%%%%%%%%%%%%%%%%%%%%%%%%%%%%%%%%%%%%%%%%%%%%%%%%%%
\section{Conclusion}
\label{sec:coclusion}

Prior to this work a rigorous analysis dealing with exact expressions for the asymptotic variability of flows was lacking in the literature.  In this paper we put forward easy computable expressions together with a simple diffusion limit theorem for the flows. 
%It is interesting to see if and how the manufacturing queueing modeling community (c.f.\ \cite{buzacott1993smm}) will adopt our %results.

The queueing networks we considered in this paper are assumed to be open and stable. This stands in contrast with the more general case handled in \cite{chen1991stochastic} (where nodes are allowed to be either under-loaded, over-loaded or critical).  It should be mentioned that our results easily carry over to the case where some nodes are over-loaded. In this case, the service times of over-loaded nodes contribute to the exogenous arrivals in a straightforward manner (see for example \cite{goodman1984nej} for an early treatment of this idea). On the contrary, the case in which some nodes are critical is more challenging. In that case, the single-server queue was only recently handled with some difficulty in \cite{al2010asymptotic}. There the authors observed a BRAVO effect (Balancing Reduces Asymptotic Variance of Outputs). We do not handle this in the network context. Thus the challenge of finding the asymptotic variability of flows in critical queueing networks remains.

\vspace{10pt}

%%%%%%%%%%%%%%%%%%%%%%%%%%%%%%%%%%%%%%%%%%
%%%%%%%%%%%%%%%%%%%%%%%%%%%%%%%%%%%%%%%%%%
%%%%%%%%%%%%%%%%%%%%%%%%%%%%%%%%%%%%%%%%%%
%%%%%%%%%%%%%%%%%%%%%%%%%%%%%%%%%%%%%%%%%%
%%%%%%%%%%%%%%%%%%%%%%%%%%%%%%%%%%%%%%%%%%
%%%%%%%%%%%%%%%%%%%%%%%%%%%%%%%%%%%%%%%%%%
%%%%%%%%%%%%%%%%%%%%%%%%%%%%%%%%%%%%%%%%%%
%%%%%%%%%%%%%%%%%%%%%%%%%%%%%%%%%%%%%%%%%%
%%%%%%%%%%%%%%%%%%%%%%%%%%%%%%%%%%%%%%%%%%
%%%%%%%%%%%%%%%%%%%%%%%%%%%%%%%%%%%%%%%%%%
%%%%%%%%%%%%%%%%%%%%%%%%%%%%%%%%%%%%%%%%%%

{\bf Acknowledgments}: %We want to take this opportunity to mark Prof.\ Peter Taylor's 60's birthday celebration which included an international academic workshop in south east Queensland Australia, on July 2019. Prof.\ Taylor has contributed heavily to the study of queueing networks as well as many other major areas of applied probability. We are honored to submit this paper to the special issue marking Prof.\ Taylor's birthday celebration.
YN is supported by Australian Research Council (ARC) grant DP180101602. Part of the work was carried out while WS was supported by an Ethel Raybould Visiting Fellowship to the University of Queensland. SBM is supported by the Australian Research Council  Centre  of  Excellence  for  Mathematical  and  Statistical Frontiers  (ACEMS)  under  grant  number  CE140100049. Last but not least, we thank an anonymous referee of an earlier version of the paper for drawing our attention to the usefulness of the innovations method calculations.

\bibliography{PaperDatabase,BookDatabase} 

\begin{thebibliography}{10}

\bibitem{al2010asymptotic}
A.~Al~Hanbali, M.~Mandjes, Y.~Nazarathy, and W.~Whitt.
\newblock The asymptotic variance of departures in critically loaded queues.
\newblock {\em Advances in Applied Probability}, 43:243--263, 2011.

\bibitem{baccelli1994ergodicity}
F.~Baccelli and S.~Foss.
\newblock Ergodicity of {J}ackson-type queueing networks.
\newblock {\em Queueing Systems}, 17(1-2):5--72, 1994.

\bibitem{bean1998output}
N.~Bean, D.~Green, and P.~G. Taylor.
\newblock The output process of an {MMPP/M/1} queue.
\newblock {\em Journal of Applied Probability}, 35(4):998--1002, 1998.

\bibitem{bramsonBook2008}
M.~Bramson.
\newblock {\em {Stability of Queueing Networks}}.
\newblock Springer, 2008.

\bibitem{BramsonDai0194}
M.~Bramson and J.~G. Dai.
\newblock {Heavy traffic limits for some queueing networks}.
\newblock {\em Annals of Applied Probability}, 11(1):49--90, 2001.

\bibitem{bramson2016proportional}
M.~Bramson, B.~D’Auria, and N.~Walton.
\newblock Proportional switching in first-in, first-out networks.
\newblock {\em Operations Research}, 65(2):496--513, 2016.

\bibitem{Burke0245}
P.J. Burke.
\newblock {The output of a queuing system}.
\newblock {\em Operations Research}, 4(6):699--704, 1956.

\bibitem{chen1995faa}
H.~Chen.
\newblock {Fluid approximations and stability of multiclass queueing networks
  I: Work-conserving disciplines}.
\newblock {\em Annals of Applied Probability}, 5(3):637--665, 1995.

\bibitem{chen1991stochastic}
H.~Chen and A.~Mandelbaum.
\newblock Stochastic discrete flow networks: Diffusion approximations and
  bottlenecks.
\newblock {\em The Annals of Probability}, 19(4):1463--1519, 1991.

\bibitem{bookChenYao2001}
H.~Chen and D.D. Yao.
\newblock {\em {Fundamentals of Queueing Networks: Performance, Asymptotics,
  and Optimization}}.
\newblock Springer, 2001.

\bibitem{Dai108}
J.~G. Dai.
\newblock On positive {H}arris recurrence of multiclass queueing networks: A
  unified approach via fluid limit models.
\newblock {\em The Annals of Applied Probability}, 5(1):49--77, 1995.

\bibitem{DaiMeyn0337}
J.~G. Dai and S.~P. Meyn.
\newblock {Stability and convergence of moments for multiclass queueing
  networks via fluid limit models}.
\newblock {\em Automatic Control, IEEE Transactions on}, 40(11):1889--1904,
  1995.

\bibitem{dai1991steady}
J.G. Dai and M.~J. Harrison.
\newblock {Steady-state analysis of RBM in a rectangle: Numerical methods and a
  queueing application}.
\newblock {\em The Annals of Applied Probability}, 1(1):16--35, 1991.

\bibitem{Daley0510}
D.J. Daley.
\newblock Queueing output processes.
\newblock {\em Advances in Applied Probability}, 8:395--415, 1976.

\bibitem{bookDisneyKiessler1987}
R.~L. Disney and P.~C. Kiessler.
\newblock {\em Traffic Processes in Queueing Networks -- A Markov Renewal
  Approach}.
\newblock The Johns Hopkins University Press, 1987.

\bibitem{DisneyKonig0283}
R.~L. Disney and D.~Konig.
\newblock Queueing networks: A survey of their random processes.
\newblock {\em SIAM Review}, 27(3):335--403, 1985.

\bibitem{Glynn0518}
P.~W. Glynn.
\newblock Diffusion approximations.
\newblock {\em In Handbooks in Operations Research, Vol 2, D.P. Heyman and M.J.
  Sobel (eds.), North-Holland, Amsterdam}, pages 145--198, 1990.

\bibitem{goodman1984nej}
J.B. Goodman and W.~Massey.
\newblock {Non-ergodic Jackson network.}
\newblock {\em Journal of Applied Probability}, 21(4):860--869, 1984.

\bibitem{gut2005probability}
A.~Gut.
\newblock {\em {Probability: A graduate course}}.
\newblock Springer, 2005.

\bibitem{gut2009stopped}
A.~Gut.
\newblock {\em Stopped random walks: Limit theorems and applications}.
\newblock Springer, 2009.

\bibitem{harrison1988brownian}
M.~Harrison.
\newblock Brownian models of queueing networks with heterogeneous customer
  populations.
\newblock In {\em Stochastic differential systems, stochastic control theory
  and applications}, pages 147--186. Springer, 1988.

\bibitem{hautphenne2015intercept}
S.~Hautphenne, Y.~Kerner, Y.~Nazarathy, and P.~Taylor.
\newblock The intercept term of the asymptotic variance curve for some queueing
  output processes.
\newblock {\em European Journal of Operational Research}, 242(2):455--464,
  2015.

\bibitem{jackson50jlq}
J.~R. Jackson.
\newblock {Jobshop-like queueing systems}.
\newblock {\em Management Science}, 10(1):131--142, 1963.

\bibitem{JacodShiryaev2003}
J.~Jacod and A.~N. Shiryaev.
\newblock {\em Limit theorems for stochastic processes}, volume 288 of {\em
  Grundlehren der mathematischen Wissenschaften [Fundamental Principles of
  Mathematical Sciences]}.
\newblock Springer-Verlag, Berlin, second edition, 2003.

\bibitem{karlin1975fcs}
S.~Karlin and H.M. Taylor.
\newblock {\em {A First Course in Stochastic Processes}}.
\newblock Academic Press, 1975.

\bibitem{bookKelly1979}
F.~Kelly.
\newblock {\em Reversibility and Stochastic Networks}.
\newblock John Wiley \& Sons, 1979.

\bibitem{kemeny1960fmc}
J.G. Kemeny and J.L. Snell.
\newblock {\em {Finite Markov Chains}}.
\newblock Springer-Verlag, 1960.

\bibitem{kim2011modeling}
S.~Kim.
\newblock Modeling cross correlation in three-moment four-parameter
  decomposition approximation of queueing networks.
\newblock {\em Operations Research}, 59(2):480--497, 2011.

\bibitem{KimRaviColm2005}
S.~Kim, R.~Muralidharan, and C.~A. O'Cinneide.
\newblock Taking account of correlations between streams in queueing network
  approximations.
\newblock {\em Queueing Systems}, 49(3):261--281, 2005.

\bibitem{kuehn1979approximate}
P.~Kuehn.
\newblock Approximate analysis of general queuing networks by decomposition.
\newblock {\em IEEE Transactions on Communications}, 27(1):113--126, 1979.

\bibitem{bookMeyn2008}
S.~P. Meyn.
\newblock {\em Control Techniques for Complex Networks}.
\newblock Cambridge University Press, 2008.

\bibitem{nazarathy2008control}
Y.~Nazarathy.
\newblock {\em On control of queueing networks and the asymptotic variance rate
  of outputs}.
\newblock PhD thesis, University of Haifa, 2008.

\bibitem{nazarathy2010positive}
Y.~Nazarathy and G.~Weiss.
\newblock {Positive Harris recurrence and diffusion scale analysis of a push
  pull queueing network}.
\newblock {\em Performance Evaluation}, 67(4):201--217, 2010.

\bibitem{reiman1984oqn}
M.I. Reiman.
\newblock {Open queueing networks in heavy traffic.}
\newblock {\em Mathematics of Operations Research}, 9(3):441--458, 1984.

\bibitem{Weiss21}
Gideon Weiss.
\newblock {\em Scheduling and Control of Queueing Networks}.
\newblock Institute of Mathematical Statistics Textbooks. Cambridge University
  Press, 2021.

\bibitem{whitt1983pqn}
W.~Whitt.
\newblock {Performance of the queueing network analyzer}.
\newblock {\em Bell System Technical Journal}, 62(9):2817--2843, 1983.

\bibitem{bookWhitt2001}
W.~Whitt.
\newblock {\em {Stochastic Process Limits}}.
\newblock Springer New York, 2002.

\bibitem{whitt2018heavy}
Ward Whitt and Wei You.
\newblock Heavy-traffic limit of the gi/gi/1 stationary departure process and
  its variance function.
\newblock {\em Stochastic Systems}, 8(2):143--165, 2018.

\bibitem{Williams0193}
R.~J. Williams.
\newblock {Diffusion approximations for open multiclass queueing networks:
  sufficient conditions involving state space collapse}.
\newblock {\em Queueing Systems}, 30:27--88, 1998.

\bibitem{Zhang06Schur}
Fuzhen Zhang, editor.
\newblock {\em The {S}chur complement and its applications}, volume~4 of {\em
  Numerical Methods and Algorithms}.
\newblock Springer-Verlag, New York, 2005.

\end{thebibliography}

%%%%%%%%%%%%%%%%%
\end{document}